\documentclass{amsart}
\usepackage{amssymb, amsmath, amsthm, mathrsfs}
\usepackage{cite, hyperref}
\makeatletter
\renewcommand\@makefntext[1]{\noindent #1}
\makeatother

\usepackage[headsep=0.25truein, top=1.25truein, bottom=1truein, hmargin={1truein,1truein}]{geometry}
\allowdisplaybreaks[4]
\numberwithin{equation}{section}

\usepackage{graphicx,xcolor}
\vfuzz2pt 
\hfuzz2pt 
\definecolor{db}{RGB}{23,20,219}
\definecolor{dg}{RGB}{2,101,15}
\colorlet{sectitlecolor}{red!60!black}
\colorlet{sectboxcolor}{cyan!30}
\colorlet{secnumcolor}{orange}
\usepackage[explicit]{titlesec}
\titleformat{\section}
{\sffamily\color{sectitlecolor}\Large\bfseries\filcenter}{}{2em}{\thesection.\quad #1}%
\newtheoremstyle{mytheorem}{5pt}{}{\color{db}}{}{\color{db}\bfseries}{}{ }{}
\theoremstyle{mytheorem}
\newtheorem*{maintheorem}{Main Theorem}
\newtheorem{theorem}{Theorem}[section]
\newtheorem{corollary}[theorem]{Corollary}
\newtheorem{lemma}[theorem]{Lemma}
\newtheorem{proposition}[theorem]{Proposition}
\theoremstyle{definition}

\theoremstyle{example}

\theoremstyle{remark}
\newtheorem{remark}[theorem]{Remark}
\numberwithin{equation}{section}

\usepackage[displaymath, mathlines]{lineno} 

\newcommand{\ov}[1]{\overline{#1}}

\newcommand{\bm}[1]{\boldsymbol{#1}}
\def\Li{{\textrm{Li}}}

\begin{document}
\footnotetext{%
Date: 2022-03-26; Version 7. 
}

\title{Weighted Sum Formulas from Shuffle Products of Multiple Zeta-star Values}

\author{Kwang-Wu Chen}
\address{Department of Mathematics, University of Taipei, 10048 Taipei, Taiwan}
\email{kwchen@utaipei.edu.tw}
\thanks{The first author (corresponding author)
was funded by the Ministry of Science and Technology,
Taiwan, R.O.C., under Grant MOST 110-2115-M-845-001.}

\author{Minking Eie}
\address{Department of Mathematics, National Chung Cheng University, 168 University Road, Min-Hsiung, Chia-Yi 62145, Taiwan}
\email{minkingeie@gmail.com}


\begin{abstract}
In this paper, we are going to perform the shuffle products of
$Z_-(n) = \sum_{a+b=m} (-1)^{b} \zeta(\{1\}^{a},b+2)$ and 
$Z_+^\star(n) = \sum_{c+d=n} \zeta^{\star}(\{1\}^{c},d+2)$
with $m+n = p$. The resulted shuffle relation is a weighted sum formula given by
\begin{align*}
   &\frac{(p+1)(p+2)}{2} \zeta(p+4)
   =\sum_{m+n=p} \sum_{|\boldsymbol{\alpha}|=p+3}
    \zeta(\alpha_{0}, \alpha_{1}, \ldots, \alpha_{m}, \alpha_{m+1}+1) \\
  &\qquad \times \sum_{a+b+c=m} 
  \Bigl( W_{\bm\alpha}(a,b,c) + W_{\bm\alpha}(a,b,c=0) 
  + W_{\bm\alpha}(a=0,b,c) + W_{\bm\alpha}(a=0,b=m,c=0) \Bigr),
\end{align*}
where $W_{\bm\alpha}(a,b,c) = 2^{\sigma(a+b+1)-\sigma(a)-(b+1)} (1-2^{1-\alpha_{a+b+1}})$,
with $\sigma(r) = \sum_{j=0}^{r} \alpha_{j}$.
\end{abstract}

\keywords{Multiple zeta value, Shuffle product}

\subjclass[2020]{Primary: 11M32; Secondary: 05A15, 33B15.}

\maketitle

\section{Introduction}\label{sec1}
For an $r$-tuple $\boldsymbol{\alpha} = (\alpha_{1}, \alpha_{2}, \ldots, \alpha_{r})$ 
of positive integers with $\alpha_{r} \geq 2$, a multiple zeta value $\zeta(\bm\alpha)$
and a multiple zeta-star value $\zeta^\star(\bm\alpha)$
are defined to be \cite{E09, E13}
\[
  \zeta(\boldsymbol{\alpha})
  = \sum_{1 \leq k_{1} < k_{2} < \cdots < k_{r}} k_{1}^{-\alpha_{1}}
    k_{2}^{-\alpha_{2}} \cdots k_{r}^{-\alpha_{r}},
\]
and
\[
  \zeta^{\star}(\boldsymbol{\alpha})
  = \sum_{1 \leq k_{1} \leq k_{2} \leq \cdots \leq k_{r}} k_{1}^{-\alpha_{1}}
    k_{2}^{-\alpha_{2}} \cdots k_{r}^{-\alpha_{r}}.
\]
We denote the parameters $w(\bm\alpha)=\mid\bm\alpha\mid 
= \alpha_{1} + \alpha_{2} + \cdots + \alpha_{r}$,
$d(\bm\alpha)=r$, and $h(\bm\alpha)=\#\{i\mid \alpha_i>1, 1\leq i\leq r\}$,
called respectively the weight, the depth, and the height of $\bm\alpha$
(or of $\zeta(\bm\alpha)$, or of $\zeta^\star(\bm\alpha)$).
For any multiple zeta value $\zeta(\bm\alpha)$, we put a bar on top of $\alpha_j$
($j=1,2,\ldots,r$) if there is a sign $(-1)^{k_j}$ appearing in the numerator of 
the summand \cite{Teo2018, Xu2019}. We call it an alternating multiple zeta value.
For example, an alternating multiple zeta value 
\[
\zeta(\ov{1},\{1\}^{m},\ov{2})=\sum_{1\leq k_1<k_2<\cdots<k_{m+2}}
\frac{(-1)^{k_1+k_{m+2}}}{k_1\cdots k_{m+1}k_{m+2}^2},
\]
where $\{a\}^{k}$ is $k$ repetitions of $a$. 
Its weight is $m+3$, the height is one, and the depth is $m+2$.

Due to Kontsevich, multiple zeta values can be represented by iterated integrals over a simplex of dimension weight:
\[
  \zeta(\alpha_{1}, \alpha_{2}, \ldots, \alpha_{r})
  = \int_{E_{|\boldsymbol{\alpha}|}} \Omega_{1} \Omega_{2} \cdots
    \Omega_{|\boldsymbol{\alpha}|} \quad \textrm{or} \quad
    \int_{0}^{1} \Omega_{1} \Omega_{2} \cdots \Omega_{|\boldsymbol{\alpha}|}
\]
with $E_{|\boldsymbol{\alpha}|}: 0 < t_{1} < t_{2} < \cdots
< t_{|\boldsymbol{\alpha}|} < 1$ and
\[
  \Omega_{j} = \begin{cases}
  \dfrac{dt_{j}}{1-t_{j}}
    &\textrm{if $j = 1, \alpha_{1}+1, \alpha_{1}+\alpha_{2}+1, \ldots, \alpha_{1}+\alpha_{2}+\cdots+\alpha_{r-1}+1$}, \\
  \dfrac{dt_{j}}{t_{j}} &\textrm{otherwise}.
  \end{cases}
\]
Sometime, we express them as words with just two alphabets:
\[
  a\{b\}^{\alpha_{1}-1} a\{b\}^{\alpha_{2}-1} \cdots a\{b\}^{\alpha_{r}-1}.
\]
The dual then is obtained by exchanging $a$, $b$ and reversing the order. For example, we express $\zeta(5)$ as $ab^4$, its dual is $a^4b$ which is $\zeta(1,1,1,2)$.

Once multiple zeta values are expressed as iterated integrals, the shuffle product of two multiple zeta values can be defined as
\[
  \int_{0}^{1} \Omega_{1} \Omega_{2} \cdots \Omega_{m}
    \int_{0}^{1} \Omega_{m+1} \Omega_{m+2} \cdots \Omega_{m+n}
  = \sum_{\sigma} \int_{0}^{1} \Omega_{\sigma(1)} \Omega_{\sigma(2)} \cdots
    \Omega_{\sigma(m+n)},
\]
where $\sigma$ ranges over all permutations on the set $\{ 1,2,\ldots,m+n \}$ that preserve the orderings of $\Omega_{1} \Omega_{2} \cdots \Omega_{m}$ and $\Omega_{m+1} \Omega_{m+2} \cdots \Omega_{m+n}$. This is equivalent to saying that for all $1 \leq i < j \leq m$ or $m+1 \leq i < j \leq m+n$, we have $\sigma^{-1}(i) < \sigma^{-1}(j)$. It is clear that the shuffle product of two multiple zeta values of weight $m$ and $n$, respectively, will produce $\binom{m+n}{n}$ multiple zeta values of weight $m+n$.

In this paper, we will investigate the following integral
\[
   \frac{1}{p!} \int_{0<t_1<t_2<1\atop 0<u_1<u_2<1}
    \left( \log \frac{1-t_{1}}{1-u_{2}} + \log \frac{t_{2}}{t_{1}}
    + \log \frac{u_{2}}{u_{1}} \right)^{p} \frac{dt_{1} dt_{2}}{(1-t_{1})t_{2}} \frac{du_{1} du_{2}}{(1-u_{1})u_{2}}.
\]
We derive our main theorem by finding its different representations.

\begin{maintheorem}
For integer $p \geq 0$, we have

\begin{align*}
   &\frac{(p+1)(p+2)}{2} \zeta(p+4)
   =\sum_{m+n=p} \sum_{|\boldsymbol{\alpha}|=p+3}
    \zeta(\alpha_{0}, \alpha_{1}, \ldots, \alpha_{m}, \alpha_{m+1}+1) \\
  &\qquad \times \sum_{a+b+c=m} 
  \Bigl( W_{\bm\alpha}(a,b,c) + W_{\bm\alpha}(a,b,c=0) 
  + W_{\bm\alpha}(a=0,b,c) + W_{\bm\alpha}(a=0,b=m,c=0) \Bigr),
\end{align*}

\noindent where $W_{\bm\alpha}(a,b,c) = 2^{\sigma(a+b+1)
-\sigma(a)-(b+1)} (1-2^{1-\alpha_{a+b+1}})$,
with $\sigma(r) = \sum_{j=0}^{r} \alpha_{j}$.
\end{maintheorem}

We note that the second, third, and fourth weight in the sum formula are 

\begin{align*}
\sum_{a+b+c=m}W_{\bm\alpha}(a,b,c=0)&=\sum_{a+b=m}W_{\bm\alpha}(a,b,0)
=\sum^m_{a=0}
2^{\sigma(m+1)-\sigma(a)-(m-a+1)} (1-2^{1-\alpha_{m+1}}),\\
\sum_{a+b+c=m}W_{\bm\alpha}(a=0,b,c)&=\sum_{b+c=m}W_{\bm\alpha}(0,b,c)
=\sum^m_{b=0}
2^{\sigma(b+1)-\alpha_{0}-(b+1)} (1-2^{1-\alpha_{b+1}}),\\
\sum_{a+b+c=m}W_{\bm\alpha}(a=0,b=m,c=0)&=W_{\bm\alpha}(0,m,0).
\end{align*}

For brevity, we hereafter use the lowercase English letters and lowercase
Greek letters in the summation, with or without subscripts, to denote 
the nonnegative and positive integers, unless otherwise specified.

In fact, our main theorem is from the integral
\[
  J_{p}
  = \frac{1}{p!} \int_{0<t_1<t_2<1\atop 0<u_1<u_2<1}
    \left( \log \frac{1-t_{1}}{1-u_{2}} + \log \frac{t_{2}}{t_{1}}
    + \log \frac{u_{2}}{u_{1}} \right)^{p} \frac{dt_{1} dt_{2}}{(1-t_{1})t_{2}} \frac{du_{1} du_{2}}{(1-u_{1})u_{2}}.
\]

As a replacement of shuffle product, we decompose the domain 
$\{(t_1,t_2,u_1,u_2)\in\mathbb R^4\mid 0<t_1<t_2<1,0<u_1<u_2<1\}$ 
into 6 disjoint simplices of dimension 4:

\begin{gather*}
  D_{1}: 0 < t_{1} < t_{2} < u_{1} < u_{2} < 1, \quad
  D_{2}: 0 < u_{1} < u_{2} < t_{1} < t_{2} < 1, \\
  D_{3}: 0 < t_{1} < u_{1} < t_{2} < u_{2} < 1, \quad
  D_{4}: 0 < t_{1} < u_{1} < u_{2} < t_{2} < 1, \\
  D_{5}: 0 < u_{1} < t_{1} < u_{2} < t_{2} < 1, \quad
  D_{6}: 0 < u_{1} < t_{1} < t_{2} < u_{2} < 1.
\end{gather*}

\noindent So
\[
  J_{p} = \sum_{j=1}^{6} \mathbb J(j)
\]
with
\[
  \mathbb J(j)
  = \frac{1}{p!} \int_{D_{j}} \left( \log \frac{1-t_{1}}{1-u_{2}}
    + \log \frac{t_{2}}{t_{1}} + \log \frac{u_{2}}{u_{1}} \right)^{p} \frac{dt_{1} dt_{2}}{(1-t_{1})t_{2}} \frac{du_{1} du_{2}}{(1-u_{1})u_{2}}.
\]
The main task now is to evaluate $\mathbb J(j)$ one by one in terms of multiple zeta values. 
The weighted sum in our main theorem is $\sum_{j=3}^6\mathbb J(j)$.
The crucial step is to prove
\begin{equation}\label{eq.j12}
  \mathbb J(1) + \mathbb J(2) = J_{p} - \frac{(p+1)(p+2)}{2} \zeta(p+4)
\end{equation}
and the resulted shuffle relation is
\begin{equation}\label{eq.j36}
  \sum_{j=3}^{6} \mathbb J(j) = \frac{(p+1)(p+2)}{2} \zeta(p+4).
\end{equation}

We organize this paper as follows: We give some preliminaries and auxiliary tools 
in Section \ref{sec.2}. In Section \ref{sec.3}, 
we break $J_p$ into six parts and evaluate $\mathbb J(1)$
and $\mathbb J(2)$ in terms of multiple zeta values. In the next section we express
$\mathbb J(j)$ for $3\leq j\leq 6$ as the part of the weighted sum of our main theorem.
We prove the crucial step Eq.\,(\ref{eq.j12})
in Section \ref{sec.5}. In Section \ref{sec.6}, we give another application of the identity
Eq.\,(\ref{eq.j36}) to another sum formula wihich
is also proved by a duality theorem due to Ohno \cite{Ohno1999}. 
In Section \ref{sec.7}, we relate $\mathbb J(3)$
with the weighted sum 
\[
\sum_{|\bm\alpha|=n+3}2^{\alpha_2}\zeta^\star(\alpha_1,\{1\}^m,\alpha_2+1).
\]
In the final section, we will give some formulas related to 
weighted alternating Euler sums, e.g., 
\[
\sum_{m+n=p}n\zeta(m+1,\ov{n+1}),\quad\mbox{and}\quad
\sum_{m+n=p}n(n-1)\zeta(m+1,\ov{n+1}).
\]

\section{Some Preliminaries and Auxiliary Tools}\label{sec.2}
Here we list a general integral representation which we use frequently.

\begin{proposition} \textup{\cite{EW2008}} 
For nonnegative integers $b_{1}, b_{2}, \ldots, b_{r}, b_{r+1}$, we have
\[
  \zeta(b_{1}+1, b_{2}+1, \ldots, b_{r}+1, b_{r+1}+2)
  = \frac{1}{b_{1}! b_{2}! \cdots b_{r+1}!} \int_{E_{r+2}} \prod_{j=1}^{r+1}
    \frac{\mathrm{d}t_{j}}{1-t_{j}} \left( \log \frac{t_{j+1}}{t_{j}} \right)^{b_{j}} \frac{\mathrm{d}t_{r+2}}{t_{r+2}},
\]
where $E_{r+2}$ is a simplex defined as $0 < t_{1} < t_{2} < \cdots < t_{r+2}$.
\end{proposition}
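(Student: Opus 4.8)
The plan is to read the identity straight off the Kontsevich iterated‑integral representation recalled in the introduction, integrating out the variables that carry the forms $dt/t$ lying strictly between consecutive forms $dt/(1-t)$. First I would set $|\bm{b}| = b_1 + \cdots + b_{r+1}$ and $w = |\bm{b}| + r + 2$, which is the weight of $\zeta(b_1+1,\ldots,b_r+1,b_{r+1}+2)$, and write, with $a$ and $b$ the alphabet symbols for $dt/(1-t)$ and $dt/t$ as in Section~\ref{sec1},
\[
  \zeta(b_1+1,\ldots,b_r+1,b_{r+1}+2)
  = \int_{E_w} a\, b^{b_1}\, a\, b^{b_2} \cdots a\, b^{b_r}\, a\, b^{b_{r+1}+1},
  \qquad E_w : 0 < s_1 < \cdots < s_w < 1 .
\]

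Next I would isolate a backbone of $r+2$ integration variables: rename the variables carrying the $r+1$ copies of $a$ (which occur precisely at the start of each block of the word) as $t_1 < t_2 < \cdots < t_{r+1}$, and rename the variable carrying the final form $b$ of the word as $t_{r+2}$. The remaining $|\bm{b}|$ variables then fall into $r+1$ consecutive blocks: for $1 \le j \le r$, the $j$-th block is the set of $b_j$ variables carrying $ds/s$ trapped between $t_j$ and $t_{j+1}$, while the last block is the set of $b_{r+1}$ variables carrying $ds/s$ trapped between $t_{r+1}$ and $t_{r+2}$. Because each block is nested between two backbone variables and is otherwise independent of the others, Fubini lets me integrate the blocks out one by one via the elementary evaluation
\[
  \int_{x < s_1 < \cdots < s_k < y} \frac{ds_1}{s_1} \cdots \frac{ds_k}{s_k}
  = \frac{1}{k!}\left( \log \frac{y}{x} \right)^{k} \qquad (k \ge 0),
\]
which I would justify by the substitution $s_i = e^{u_i}$, identifying the left side with the volume $\tfrac{1}{k!}(\log y - \log x)^k$ of a simplex (the case $k=0$ being the empty product $1$). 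Applying this to the $j$-th block with endpoints $(t_j,t_{j+1})$ for $1 \le j \le r$ and $(t_{r+1},t_{r+2})$ for $j=r+1$ contributes the factor $\tfrac{1}{b_j!}\bigl(\log(t_{j+1}/t_j)\bigr)^{b_j}$, collapses the $w$-dimensional integral onto $E_{r+2} : 0 < t_1 < \cdots < t_{r+2} < 1$ (the upper bound $t_{r+2}<1$ being inherited from $E_w$), and leaves precisely the forms $dt_j/(1-t_j)$ for $1 \le j \le r+1$ together with $dt_{r+2}/t_{r+2}$. Pulling the constants $1/(b_1!\cdots b_{r+1}!)$ out front then gives the asserted formula.

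The hard part is not analytic but organizational: I must check that the Fubini rearrangement is legitimate and that the order relations come out exactly as stated. I would handle this either by peeling the blocks off starting from the innermost (largest) variables, or simply by noting that once the backbone $t_1 < \cdots < t_{r+2}$ is fixed the remaining integration factors as a product of mutually independent simplex integrals of the above type; positivity of every integrand together with convergence of the original integral makes this manipulation harmless. One degenerate case worth recording is $b_j = 0$, where the $j$-th block is empty and the corresponding factor is correctly $\tfrac{1}{0!}(\log(t_{j+1}/t_j))^0 = 1$, so the formula is uniform in the $b_j$; a final sanity check is the dimension count $(r+2) + |\bm{b}| = w$, confirming that exactly the right number of variables has been integrated away.
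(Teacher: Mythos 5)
The paper does not prove this proposition; it is quoted from the cited reference [EW2008], so there is no in-paper argument to compare against. Your derivation is correct and complete: starting from the Kontsevich representation, the word for $\zeta(b_1+1,\ldots,b_r+1,b_{r+1}+2)$ is $a b^{b_1} a b^{b_2}\cdots a b^{b_r} a b^{b_{r+1}+1}$, your backbone consists of the $r+1$ variables carrying $dt/(1-t)$ plus the last variable of the word, and the block-by-block evaluation $\int_{x<s_1<\cdots<s_k<y}\prod_i ds_i/s_i=\tfrac{1}{k!}(\log(y/x))^k$ collapses the $w$-dimensional simplex onto $E_{r+2}$ with exactly the stated integrand; Tonelli justifies the rearrangement since everything is nonnegative and the original integral converges (because the last exponent is $b_{r+1}+2\geq 2$). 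This is essentially the standard derivation one finds in the cited source, and your attention to the degenerate case $b_j=0$ and the dimension count is appropriate.
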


\begin{proposition}\cite[Proposition 2.1]{EW2008}\label{prop.int}
For integers $p,q,r,\ell \geq 0$,

\begin{align*}
  &\quad \sum_{|\boldsymbol{\alpha}|=q+r+1}
    \zeta(\{1\}^{p}, \alpha_{0}, \alpha_{1}, \ldots, \alpha_{q}+\ell+1) \\
  &= \frac{1}{p!q!r!\ell!} \int_{0 < t_{1} < t_{2} < 1}
    \left( \log \frac{1}{1-t_{1}} \right)^{p}
    \left( \log \frac{1-t_{1}}{1-t_{2}} \right)^{q}
    \left( \log \frac{t_{2}}{t_{1}} \right)^{r}
    \left( \log \frac{1}{t_{2}} \right)^{\ell}
    \frac{dt_{1} dt_{2}}{(1-t_{1})t_{2}}.
\end{align*}
\end{proposition}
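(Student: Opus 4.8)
The plan is to expand the innermost logarithmic factor and peel off the variables $t_1$ and $t_2$ successively, reducing the two-variable integral on $0<t_1<t_2<1$ to the standard simplex representation of multiple zeta values given in the first proposition. First I would introduce auxiliary variables: write the factor $\left(\log\frac{1}{1-t_1}\right)^p$ by recalling that $\log\frac{1}{1-t_1}=\int_0^{t_1}\frac{ds}{1-s}$, so that
\[
 \frac{1}{p!}\left(\log\frac{1}{1-t_1}\right)^p
 =\int_{0<s_1<\cdots<s_p<t_1}\prod_{i=1}^p\frac{ds_i}{1-s_i},
\]
and similarly represent $\frac{1}{q!}\left(\log\frac{1-t_1}{1-t_2}\right)^q$, $\frac{1}{r!}\left(\log\frac{t_2}{t_1}\right)^r$, and $\frac{1}{\ell!}\left(\log\frac{1}{t_2}\right)^\ell$ as iterated integrals over nested simplices living respectively in the intervals $(s_p,t_1)$... wait, more carefully: $\log\frac{1-t_1}{1-t_2}=\int_{t_1}^{t_2}\frac{ds}{1-s}$ since $\frac{d}{ds}\log\frac{1}{1-s}=\frac{1}{1-s}$, and $\log\frac{t_2}{t_1}=\int_{t_1}^{t_2}\frac{ds}{s}$, while $\log\frac{1}{t_2}=\int_{t_2}^1\frac{ds}{s}$. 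The key observation is that these four blocks of new variables occupy disjoint regions once we also use the outer factors $\frac{dt_1}{1-t_1}$ and $\frac{dt_2}{t_2}$: on $0<t_1<t_2<1$, the $p$ variables from the first block lie in $(0,t_1)$, the $q$ variables from the second block and the $r$ variables from the third block both lie in $(t_1,t_2)$, and the $\ell$ variables from the fourth block lie in $(t_2,1)$.

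Next I would handle the overlap of the $q$-block and the $r$-block on $(t_1,t_2)$ by a shuffle: since both consist of iterated integrals of one-forms over the same interval $(t_1,t_2)$ — the $q$-block using $\frac{ds}{1-s}$ and the $r$-block using $\frac{ds}{s}$ — their product is the sum over all interleavings, i.e.\ over all ways to choose which of the $q+r$ slots carry a $\frac{ds}{1-s}$ versus a $\frac{ds}{s}$, subject to the internal order of each block being preserved. Including the two outer forms, the one-form $\frac{dt_1}{1-t_1}$ sits just below this interval and $\frac{dt_2}{t_2}$ just above, and the $p$-block (all $\frac{ds}{1-s}$) sits below $t_1$ and the $\ell$-block (all $\frac{ds}{s}$) sits above $t_2$. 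Reading the resulting word over the alphabet $\{\frac{dt}{1-t},\frac{dt}{t}\}$ from the outside in, one gets: $p$ copies of $\frac{dt}{1-t}$, then $\frac{dt}{1-t}$ (from the outer $t_1$), then an interleaving of $q$ copies of $\frac{dt}{1-t}$ with $r$ copies of $\frac{dt}{t}$, then $\frac{dt}{t}$ (from the outer $t_2$), then $\ell$ copies of $\frac{dt}{t}$. Collecting the maximal runs of consecutive $\frac{dt}{1-t}$'s and recording the gaps, each interleaving is exactly coded by a composition: writing $\alpha_0\ge 1$ for the number of initial $\frac{dt}{1-t}$'s contributed before the first $\frac{dt}{t}$ inside the middle block (this includes the forced one from $t_1$), then $\alpha_1,\dots,\alpha_{q}$ for the subsequent runs, one sees $\alpha_0+\alpha_1+\cdots+\alpha_q=q+r+1$ and the final exponent on the last $\frac{dt}{t}$-run is $\ell+1$ (the forced one from $t_2$ plus the $\ell$-block). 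Invoking the first proposition on the resulting word gives precisely $\zeta(\{1\}^p,\alpha_0,\alpha_1,\dots,\alpha_q+\ell+1)$, and summing over all interleavings gives the sum over $|\bm\alpha|=q+r+1$.

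The step I expect to require the most care is the bookkeeping in the shuffle-to-composition translation: one must check that the number of $\frac{dt}{1-t}$'s in the middle interleaved block is exactly $q+1$ maximal runs (so the index vector has the stated length $q+1$ after absorbing the outer forms), that no run can be empty except possibly by the conventions built into the $\{1\}^p$ and the "$+1$"s, and that the total weight adds up to $p+q+r+\ell+2$ as it must. A clean way to make this rigorous, avoiding any fencepost ambiguity, is to argue by induction on $r$: the case $r=0$ is immediate from the first proposition applied directly (no shuffle needed), and the inductive step moves one $\frac{dt}{t}$ from the $r$-block into the middle, using the elementary recursion for how inserting a single $b$-letter into a word $a\{b\}^{\alpha_0-1}a\{b\}^{\alpha_1-1}\cdots$ splits one part of the composition — this reproduces exactly the passage from $\sum_{|\bm\alpha|=q+r}$ to $\sum_{|\bm\alpha|=q+r+1}$. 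Either route — direct shuffle identity or induction on $r$ — completes the proof.
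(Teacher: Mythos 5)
The paper offers no proof of this proposition at all: it is quoted verbatim from Eie--Wei \cite{EW2008}, so there is nothing in-paper to compare against. Your overall strategy --- writing each factor $\frac{1}{k!}(\log(\cdot))^{k}$ as an iterated integral of $k$ identical one-forms over the appropriate subinterval, shuffling the $q$-block of $\frac{ds}{1-s}$ against the $r$-block of $\frac{ds}{s}$ on $(t_1,t_2)$, and reading the resulting word $a^{p+1}wb^{\ell+1}$ through the Kontsevich representation --- is the standard argument for this identity and does succeed; it is also the same mechanism the paper itself deploys later for $\mathbb J(1)$ and $\mathbb J(3)$.

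There is, however, a concrete error in your shuffle-to-composition dictionary, and as written that step fails. You declare $\alpha_0,\dots,\alpha_q$ to be the lengths of the maximal runs of $\frac{dt}{1-t}$ (the letter $a$). But the middle segment contains exactly $q+1$ letters $a$ in total (the forced one from $t_1$ plus the $q$ from the $q$-block), so under your coding $\alpha_0+\cdots+\alpha_q$ would equal $q+1$, not $q+r+1$ as you assert; moreover a run of $j$ consecutive $a$'s does not contribute a single index $j$ to the multiple zeta value --- it contributes $\{1\}^{j-1}$ followed by an index governed by the next run of $b$'s. The correct dictionary is dual to the one you wrote: with the forced $a$ from $t_1$ prepended and the forced $b$ from $t_2$ together with the $\ell$-block appended, the middle word is uniquely of the form $ab^{e_0}ab^{e_1}\cdots ab^{e_{q-1}}ab^{e_q+\ell+1}$ with $e_0+\cdots+e_q=r$ and $e_i\ge 0$; setting $\alpha_i=e_i+1$ gives $\alpha_i\ge 1$, $|\boldsymbol{\alpha}|=q+r+1$, the word parses as $\zeta(\{1\}^{p},\alpha_0,\dots,\alpha_{q-1},\alpha_q+\ell+1)$, and the $\binom{q+r}{q}$ interleavings biject with the $\binom{q+r}{q}$ such compositions, each appearing exactly once. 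With that correction your argument closes, and the induction on $r$ you offer as a fallback becomes unnecessary (as sketched it also leaves unverified the multiplicity count arising from $\frac{1}{r!}(\log\frac{t_2}{t_1})^{r}\cdot\log\frac{t_2}{t_1}=(r+1)\cdot\frac{1}{(r+1)!}(\log\frac{t_2}{t_1})^{r+1}$).
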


The well-known sum formula due to Granville \cite{G96} asserted that
\[
  \sum_{|\boldsymbol{\alpha}|=q+r+1}
    \zeta(\alpha_{0}, \alpha_{1}, \ldots, \alpha_{q}+1)
  = \zeta(q+r+2).
\]
If we let $p = \ell = 0$ in the formula of Proposition \ref{prop.int}, then we have
\begin{equation}
  \sum_{|\boldsymbol{\alpha}|=q+r+1}
    \zeta(\alpha_{0}, \alpha_{1}, \ldots, \alpha_{q}+1)
  = \frac{1}{q!r!} \int_{0 < t_{1} < t_{2} < 1}
    \left( \log \frac{1-t_{1}}{1-t_{2}} \right)^{q}
    \left( \log \frac{t_{2}}{t_{1}} \right)^{r}
    \frac{dt_{1} dt_{2}}{(1-t_{1})t_{2}}.
\end{equation}
Therefore the integral 
\[
  \frac{1}{q!r!} \int_{0 < t_{1} < t_{2} < 1}
    \left( \log \frac{1-t_{1}}{1-t_{2}} \right)^{q}
    \left( \log \frac{t_{2}}{t_{1}} \right)^{r}
    \frac{dt_{1} dt_{2}}{(1-t_{1})t_{2}}=\zeta(q+r+2).
\]

Our main results are obtained by the convolution of the following sums:
\begin{equation}\label{eq.zz}
  Z_-(m) = \sum_{a+b=m} (-1)^{b} \zeta(\{1\}^{a},b+2) \quad \textrm{and} \quad
  Z^\star_+(n) = \sum_{c+d=n} \zeta^{\star}(\{1\}^{c},d+2).
\end{equation}
A special case of results proved by Le and Murakami \cite{LM95} is stated as follows.
\begin{equation}\label{eq.a.1}
\sum_{a+b=2w-2}(-1)^{a+1}\zeta(\{1\}^a,b+2)
=2\zeta(\ov{2w}).
\end{equation}

Both $Z_{-}(n)$, and $Z^\star_{+}(n)$ 
are sums of multiple zeta(-star) values of height one and can be expressed as double integrals
(see \cite{CE2021})

\begin{align*}
  Z_-(n)
  &= \frac{1}{n!} \int_{E_{2}}
    \left( \log \frac{1}{1-t_{1}} - \log \frac{t_{2}}{t_{1}} \right)^{n} \frac{\mathrm{d}t_{1} \mathrm{d}t_{2}}{(1-t_{1})t_{2}}, \\
  Z^\star_+(n)
  &= \frac{1}{n!} \int_{E_{2}}
    \left( \log \frac{1}{1-t_{2}} + \log \frac{t_{2}}{t_{1}} \right)^{n} 
    \frac{\mathrm{d}t_{1} \mathrm{d}t_{2}}{(1-t_{1})t_{2}}.
\end{align*}

Through the duality theorem $\zeta(\{1\}^{a},b+2) = \zeta(\{1\}^{b},a+2)$, we see that 
$Z_-(2m+1) = 0$. Therefore, we combine these with Eq.\,(\ref{eq.a.1}), then we have

\begin{proposition} \label{prop.22}\cite[Proposition 3.1]{CE2021}
For any nonnegative integer $m$, we have $Z_-(2m+1)=0$ and 
\[
  Z_-(2m)
  = \zeta^{\star}(\{2\}^{m+1})
  = 2 \left( 1 - \frac{1}{2^{2m+1}} \right) \zeta(2m+2)=-2\zeta(\ov{2m+2}).
\]
\end{proposition}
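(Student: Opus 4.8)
The plan is to split according to the parity of the index and to use two different tools: the duality relation $\zeta(\{1\}^a,b+2)=\zeta(\{1\}^b,a+2)$ for the odd case, and the Le--Murakami evaluation \eqref{eq.a.1} for the even case, after which a short chain of classical conversions delivers the remaining equalities. For $Z_-(2m+1)=\sum_{a+b=2m+1}(-1)^b\zeta(\{1\}^a,b+2)$ I would pair the summand indexed by $(a,b)$ with the one indexed by $(b,a)$: these are always distinct because $a+b=2m+1$ is odd, and this pairing visits each index exactly once. Applying duality to the second summand rewrites it with the common factor $\zeta(\{1\}^a,b+2)$, so the two terms of a pair carry coefficients $(-1)^b$ and $(-1)^a$; since $a+b$ is odd, $(-1)^a=-(-1)^b$, and each pair cancels. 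Hence $Z_-(2m+1)=0$.

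For the even case, start from $Z_-(2m)=\sum_{a+b=2m}(-1)^b\zeta(\{1\}^a,b+2)$ and note that $a+b=2m$ even forces $(-1)^b=(-1)^a$, so the sum equals $-\sum_{a+b=2m}(-1)^{a+1}\zeta(\{1\}^a,b+2)$. Taking $w=m+1$ in \eqref{eq.a.1} identifies this inner sum, yielding $Z_-(2m)=-2\zetabar{2m+2}$. The Dirichlet eta evaluation $\zetabar{s}=(2^{1-s}-1)\zeta(s)$ with $s=2m+2$ then converts $-2\zetabar{2m+2}$ into $2\bigl(1-2^{-2m-1}\bigr)\zeta(2m+2)=2\bigl(1-\tfrac{1}{2^{2m+1}}\bigr)\zeta(2m+2)$, which accounts for the third and fourth expressions in the displayed chain.

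It remains to match these with $\zeta^\star(\{2\}^{m+1})$, and this reduces to the classical closed form $\zeta^\star(\{2\}^n)=2(1-2^{1-2n})\zeta(2n)$ evaluated at $n=m+1$. I would derive it from the product expansion $\pi x/\sin\pi x=\prod_{k\ge1}(1-x^2/k^2)^{-1}=\sum_{n\ge0}\zeta^\star(\{2\}^n)x^{2n}$ together with the standard Bernoulli-number expansion of $\pi x\csc\pi x$ and Euler's formula for $\zeta(2n)$; equivalently it may be quoted, and the full statement is recorded in \cite{CE2021}. No step here is a real obstacle: the argument is essentially an exercise in sign bookkeeping, the one point needing attention being the substitution matching $2w-2$ with $2m$ in \eqref{eq.a.1}. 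If a completely self-contained proof is wanted, the only genuinely nontrivial input is that last closed form for $\zeta^\star(\{2\}^n)$.
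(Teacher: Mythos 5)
Your proof is correct and follows essentially the same route as the paper, which likewise gets $Z_-(2m+1)=0$ from the duality $\zeta(\{1\}^{a},b+2)=\zeta(\{1\}^{b},a+2)$ and then obtains the even case by combining with the Le--Murakami identity \eqref{eq.a.1}, quoting the classical evaluation of $\zeta^{\star}(\{2\}^{m+1})$ for the remaining equalities. Your write-up merely supplies the sign bookkeeping and the $\pi x\csc\pi x$ derivation that the paper delegates to the citation \cite{CE2021}.
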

The sum of multiple zeta-star values $Z^\star_+(n)$
appeared as the principal term of the evaluation of $\zeta^{\star}(3,\{2\}^{n})$
(see \cite{CE2018}):
\[
  \zeta^{\star}(3,\{2\}^{n})
  = Z_+^\star(2n+1)
    - 2 \sum_{a+b=n} \zeta(\{2\}^{a}) \zeta(2b+3).
\]

Arakawa and Kaneko \cite{AK1999} defined the function
\[
\xi_k(s)=\frac1{\Gamma(s)}\int^\infty_0\frac{t^{s-1}}{e^t-1}
\Li_k(1-e^{-t})\,dt,
\]
where $\Li_k(s)$ denotes the $k$-th polylogarithm 
$\Li_k(s)=\sum^\infty_{n=1}\frac{s^n}{n^k}$.
It is exactly the multiple zeta-star values of height one
\[
\xi_k(s)=\zeta^\star(\{1\}^{s-1},k+1).
\]
Many properties of the generalized Arakawa-Kaneko zeta functions have been 
discovered recently (ref. \cite{Chen2019, CC2010, Kargein2020}). 

Indeed, $Z^\star_+(n)$ has the generating function
\[
  \sum_{n=0}^{\infty} Z^\star_+(n) (-x)^{n}
  = \int_{0 < t_{1} < t_{2} < 1} (1-t_{2})^{x}
    \left( \frac{t_{1}}{t_{2}} \right)^{x}
    \frac{dt_{1} dt_{2}}{(1-t_{1})t_{2}}.
\]
The dual of the above is
\[
  \int_{0 < u_{1} < u_{2} < 1} \left( \frac{1-u_{2}}{1-u_{1}} \right)^{x}
    u_{1}^{x} \frac{du_{1} du_{2}}{(1-u_{1})u_{2}},
\]
which can be evaluated as
\[
  \sum_{k=1}^{\infty} \frac{1}{(k+x) (k+2x)}
    \frac{\Gamma(k+x)^{2}}{\Gamma(k) \Gamma(k+2x)}.
\]
Fortunately, we have the identity
\[
  \sum_{k=1}^{\infty} \frac{1}{(k+x) (k+2x)}
    \frac{\Gamma(k+x)^{2}}{\Gamma(k) \Gamma(k+2x)}
  = \sum_{\ell=1}^{\infty} \frac{2(-1)^{\ell-1}}{(\ell+x)^{2}}
\]
by investigating possible poles of both sides of the meromorphic functions. This leads to the evaluation of $Z^\star_+(n)$:

\begin{equation}\label{eq.zstar}
\sum_{a+b=w-2}\zeta^\star(\{1\}^a,b+2)
=2(w-1)(1-2^{1-w})\zeta(w)=-2(w-1)\zeta(\ov{w}),
\end{equation}
where $w>1$ is an integer. This formula was first proved by 
Ohno \cite[Theorem 8]{Ohno2005} in 2005.

\section{The Main Integral $J_p$ and its $\mathbb J(1), \mathbb J(2)$ Parts}\label{sec.3}
We begin with the integral
\[
  J_p
  = \frac{1}{p!} \int_{E_{2} \times E_{2}}
    \left( \log \frac{1-t_{1}}{1-u_{2}} + \log \frac{t_{2}}{t_{1}}
    + \log \frac{u_{2}}{u_{1}} \right)^{p} \frac{dt_{1} dt_{2}}{(1-t_{1})t_{2}} \frac{du_{1} du_{2}}{(1-u_{1})u_{2}},
\]
where $E_{2} \times E_{2} = \{ (t_{1}, t_{2}, u_{1}, u_{2}) \in \mathbb{R}^{4} \mid 0 < t_{1} < t_{2} < 1, 0 < u_{1} < u_{2} < 1 \}$. As

\begin{align*}
  &\quad \frac{1}{p!} \left( \log \frac{1-t_{1}}{1-u_{2}}
    + \log \frac{t_{2}}{t_{1}} + \log \frac{u_{2}}{u_{1}} \right)^{p} \\
  &= \frac{1}{p!} \left\{ \left( -\log \frac{1}{1-t_{1}}
    + \log \frac{t_{2}}{t_{1}} \right)
    + \left( \log \frac{1}{1-u_{2}} + \log \frac{u_{2}}{u_{1}} \right) \right\}^p \\
  &= \sum_{m+n=p} \frac{1}{m!n!}
    \left( -\log \frac{1}{1-t_{1}} + \log \frac{t_{2}}{t_{1}} \right)^{m} 
    \left( \log \frac{1}{1-u_{2}} + \log \frac{u_{2}}{u_{1}} \right)^{n},
\end{align*}

\noindent so we have the evaluation
\[
  J_p = \sum_{m+n=p} (-1)^{m} Z_-(m) Z_+^\star(n)
\]
where $Z_-(m)$ and $Z_+^\star(n)$ are defined in Eq\,(\ref{eq.zz}).
By Proposition \ref{prop.22} $Z_-(2m+1)=0$, thus 
\[
J_p=\sum_{m=0}^{[p/2]} Z_-(2m) Z_+^\star(p-2m)=\sum_{m+n=p}Z_-(m)Z_+^\star(n).
\]
We got an expression of the finite convolution of $Z_-(m)$
and $Z_+^\star(n)$ in \cite[Corollary 5.4]{CE2021}:
\[
J_{p}=2((-1)^p-1)\zeta(p+2,\ov{2})+(p+2)(p+1+2^{-p-2})\zeta(p+4).
\]

In this paper, we consider another expression of $J_p$.
When we decompose $E_{2} \times E_{2}$ into 6 simplices of dimension 4:

\begin{gather*}
  D_{1}: 0 < t_{1} < t_{2} < u_{1} < u_{2} < 1, \quad
  D_{2}: 0 < u_{1} < u_{2} < t_{1} < t_{2} < 1, \\
  D_{3}: 0 < t_{1} < u_{1} < t_{2} < u_{2} < 1, \quad
  D_{4}: 0 < t_{1} < u_{1} < u_{2} < t_{2} < 1, \\
  D_{5}: 0 < u_{1} < t_{1} < u_{2} < t_{2} < 1, \quad
  D_{6}: 0 < u_{1} < t_{1} < t_{2} < u_{2} < 1
\end{gather*}

\noindent and let
\[
  \mathbb J(j)
  = \frac{1}{p!} \int_{D_{j}} \left( \log \frac{1-t_{1}}{1-u_{2}}
    + \log \frac{t_{2}}{t_{1}} + \log \frac{u_{2}}{u_{1}} \right)^{p} \frac{dt_{1} dt_{2}}{(1-t_{1})t_{2}} \frac{du_{1} du_{2}}{(1-u_{1})u_{2}}.
\]
Then $J_p$ is breaked into 6 parts:
\[
  J_{p} = \sum_{j=1}^{6} \mathbb J(j).
\]
In the following, we will evaluate $\mathbb J(j)$, for $1\leq j\leq 6$, in terms of multiple zeta values.
\begin{proposition}
Given any nonnegative integer $p$, we have
\[
  \mathbb J(1)
  = \sum_{m+n=p} \sum_{a+b+c=m} \sum_{u+v=n}
    \sum_{\substack{|\boldsymbol{\alpha}|=a+u+1 \\ |\boldsymbol{\beta}|=c+v+1}} \zeta(\alpha_{0}, \alpha_{1}, \ldots, \alpha_{a}+1, \{1\}^{b}, \beta_{0}, \ldots, \beta_{c}+1).
\]
\end{proposition}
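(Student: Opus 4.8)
The plan is to evaluate $\mathbb J(1)$ directly by exploiting the total ordering $0<t_1<t_2<u_1<u_2<1$ on $D_1$, which lets us split the integral into a product-like iterated integral. On $D_1$, the integrand's logarithmic factor is
\[
\log\frac{1-t_1}{1-u_2}+\log\frac{t_2}{t_1}+\log\frac{u_2}{u_1}
=\left(\log\frac{1-t_1}{1-t_2}+\log\frac{t_2}{t_1}\right)
+\left(\log\frac{1-t_2}{1-u_1}\right)
+\left(\log\frac{1-u_1}{1-u_2}+\log\frac{u_2}{u_1}\right),
\]
where I have inserted a telescoping pair $\pm\log(1-t_2)$ and $\pm\log(1-u_1)$ so that each of the three grouped terms is nonnegative on $D_1$. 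Then I would expand the $p$-th power multinomially as a sum over $b+(a+u)+c'$-type partitions — more precisely, first split $p$ into the three groups with multinomial coefficients, writing $\frac1{p!}(\cdots)^p$ as a sum over $e+f+g=p$ of $\frac1{e!f!g!}$ times the three grouped logs raised to powers $e,f,g$ respectively.

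Next I would recognize each piece. The middle factor $\frac1{f!}\left(\log\frac{1-t_2}{1-u_1}\right)^f$ together with the measure contributes a string of $f$ consecutive $\{1\}$'s (this is the standard device: $\frac1{f!}(\log\frac{1-t_2}{1-u_1})^f$ integrated against $\frac{ds}{1-s}$ over $t_2<s<u_1$ builds $f$ copies of the differential form $\frac{ds}{1-s}$ with no $\frac{ds}{s}$ in between, i.e.\ depth-$f$ all-ones). The first grouped factor, paired with $\frac{dt_1\,dt_2}{(1-t_1)t_2}$ on the simplex $0<t_1<t_2$, is exactly the integral from the displayed consequence of Proposition~\ref{prop.int} with $p=\ell=0$, except that here I need the version with the extra $\log(1/t_2)$-type weight absent — it will produce $\sum_{|\bm\alpha|=a+u+1}\zeta(\alpha_0,\ldots,\alpha_a+1)$ after renaming $e=a+u$ appropriately; similarly the third grouped factor, paired with $\frac{du_1\,du_2}{(1-u_1)u_2}$, produces $\sum_{|\bm\beta|=c+v+1}\zeta(\beta_0,\ldots,\beta_c+1)$. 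The only subtlety is that the "upper endpoint" of the $t$-integral is $u_1$ (not $1$) and the "lower endpoint" of the $u$-integral is $t_2$, so the three blocks are nested rather than independent; concatenating the three iterated integrals along $0<t_1<t_2<u_1<u_2<1$ glues them into a single multiple zeta value whose argument string is the concatenation $(\alpha_0,\ldots,\alpha_a+1,\{1\}^b,\beta_0,\ldots,\beta_c+1)$, where $b=f$.

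To make the bookkeeping match the stated indices, I would reparametrize: the exponent $e$ of the first block gets written as $e=a+u$ with the weight identity $|\bm\alpha|=a+u+1$ coming from Proposition~\ref{prop.int} (the "$r$" there is $u$, the "$q$" is $a$), and likewise $g=c+v$ with $|\bm\beta|=c+v+1$; then $m:=a+b+c$ and $n:=u+v$ collect the $t$-side and $u$-side contributions, and $m+n=a+b+c+u+v=e+f+g=p$, giving the outer sum $\sum_{m+n=p}$. Re-indexing the sum in this grouped form yields exactly
\[
\mathbb J(1)=\sum_{m+n=p}\ \sum_{a+b+c=m}\ \sum_{u+v=n}\ \sum_{\substack{|\bm\alpha|=a+u+1\\|\bm\beta|=c+v+1}}\zeta(\alpha_0,\ldots,\alpha_a+1,\{1\}^b,\beta_0,\ldots,\beta_c+1).
\]

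The main obstacle I anticipate is the justification of the "gluing" step: verifying that the iterated integral over the full simplex $D_1$ factors cleanly into the three nested blocks with the telescoped logarithms, and that the endpoints $t_2$ and $u_1$ (which are integration variables shared between adjacent blocks) assemble correctly into one long iterated integral of the Euler–Zagier type. Concretely, I would need a clean lemma saying that
\[
\frac1{e!}\!\left(\log\frac{1-t_1}{1-t_2}+\log\frac{t_2}{t_1}\right)^{e}\!\frac{dt_1dt_2}{(1-t_1)t_2}
\]
integrated over $0<t_1<t_2<s$ equals $\sum_{|\bm\alpha|=e+1}$ of the iterated integral representing the word $a\{b\}^{\alpha_0-1}\cdots a\{b\}^{\alpha_a-1}\cdot(\text{closing }a)$ up to the point $s$ — this is really just Proposition~\ref{prop.int} with a variable upper limit, so it should go through, but writing it carefully so the three blocks concatenate without double-counting the shared endpoints is where the care is needed. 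Everything else (the multinomial expansion, the re-indexing $e=a+u$, $g=c+v$, $b=f$, and collecting $m=a+b+c$, $n=u+v$) is routine.
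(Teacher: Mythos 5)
Your proposal is correct and follows essentially the same route as the paper: the same telescoping insertion of $\log(1-t_2)$ and $\log(1-u_1)$ to rewrite $\log\frac{1-t_1}{1-u_2}$ as a sum of three blocks adapted to the ordering on $D_1$, the same multinomial expansion, and the same identification of the outer blocks via the integral representation of Proposition \ref{prop.int} (with $p=\ell=0$ and a variable endpoint) and of the middle factor $\frac1{b!}\bigl(\log\frac{1-t_2}{1-u_1}\bigr)^{b}$ as the string $\{1\}^{b}$. The only cosmetic difference is that you group into three powers $e+f+g=p$ and then re-split $e=a+u$, $g=c+v$, while the paper first splits $p=m+n$ and then $m=a+b+c$, $n=u+v$; the resulting index bookkeeping and the gluing of the three nested blocks into one Euler--Zagier iterated integral are identical.
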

\begin{proof}
We first expand the integrand of $\mathbb J(1)$ as 
\[
\sum_{m+n=p}\frac1{m!n!}\left(\log\frac{1-t_1}{1-u_2}\right)^m
\left(\log\frac{t_2}{t_1}+\log\frac{u_2}{u_1}\right)^n.
\]
Since $\mathbb J(1)$ is an integral on $D_1: 0<t_1<t_2<u_1<u_2<1$, we replace the factor
\[
  \log \frac{1-t_{1}}{1-u_{2}}
\]
by its equal
\[
  \log \frac{1-t_{1}}{1-t_{2}} + \log \frac{1-t_{2}}{1-u_{1}}
    + \log \frac{1-u_{1}}{1-u_{2}}.
\]
Then we expand the integrand of $\mathbb J(1)$ as

\begin{align*}
  &\sum_{m+n=p} \sum_{a+b+c=m} \sum_{u+v=n} \frac{1}{a!b!c!u!v!} \\
  &\quad \times \left( \log \frac{1-t_{1}}{1-t_{2}} \right)^{a}
    \left( \log \frac{1-t_{2}}{1-u_{1}} \right)^{b}
    \left( \log \frac{1-u_{1}}{1-u_{2}} \right)^{c}
    \left( \log \frac{t_{2}}{t_{1}} \right)^{u}
    \left( \log \frac{u_{2}}{u_{1}} \right)^{v}.
\end{align*}

The factor
\[
  \frac{1}{a!u!} \left( \log \frac{1-t_{1}}{1-t_{2}} \right)^{a}
    \left( \log \frac{t_{2}}{t_{1}} \right)^{u}
\]
forms a sum and the other factor
\[
  \frac{1}{c!v!} \left( \log \frac{1-u_{1}}{1-u_{2}} \right)^{c}
    \left( \log \frac{u_{2}}{u_{1}} \right)^{v}
\]
forms another sum, so
\[
  \mathbb J(1)
  = \sum_{m+n=p} \sum_{a+b+c=m} \sum_{u+v=n}
    \sum_{\substack{|\boldsymbol{\alpha}|=a+u+1 \\ |\boldsymbol{\beta}|=c+v+1}} \zeta(\alpha_{0}, \alpha_{1}, \ldots, \alpha_{a}+1, \{1\}^{b}, \beta_{0}, \ldots, \beta_{c}+1). \qedhere
\]
\end{proof}

\begin{proposition}
For any nonnegative integer $p$, we have
\[
  \mathbb J(2) = \sum_{c+d+m=p} (-1)^{m} \zeta(c+2,\{1\}^{m},d+2).
\]
\end{proposition}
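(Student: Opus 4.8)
The plan is to mimic the proof of the preceding proposition. On the simplex $D_2: 0<u_1<u_2<t_1<t_2<1$ I would rewrite the three logarithms in the integrand so that each is attached to one of the gaps $u_1<u_2$, $u_2<t_1$, $t_1<t_2$ of the chain, expand by the multinomial theorem, and then read off a single multiple zeta value from each resulting term.

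First I would record the sign that drives everything: on $D_2$ we have $u_2<t_1$, hence $1-u_2>1-t_1$ and
\[
  \log\frac{1-t_1}{1-u_2}=-\log\frac{1-u_2}{1-t_1}.
\]
In contrast to the situation on $D_1$, no further chain decomposition of this logarithm is needed, because $u_2$ and $t_1$ are already consecutive along the chain; likewise $\log\frac{t_2}{t_1}$ and $\log\frac{u_2}{u_1}$ are already attached to single gaps. Hence the integrand of $\mathbb J(2)$ equals $\frac{1}{p!}\bigl(\log\frac{u_2}{u_1}-\log\frac{1-u_2}{1-t_1}+\log\frac{t_2}{t_1}\bigr)^{p}$ times the usual form, and the multinomial theorem gives
\[
  \mathbb J(2)=\sum_{c+m+d=p}\frac{(-1)^{m}}{c!\,m!\,d!}\int_{D_2}
  \left(\log\frac{u_2}{u_1}\right)^{c}\left(\log\frac{1-u_2}{1-t_1}\right)^{m}\left(\log\frac{t_2}{t_1}\right)^{d}
  \frac{dt_1\,dt_2}{(1-t_1)t_2}\,\frac{du_1\,du_2}{(1-u_1)u_2},
\]
with the factor $(-1)^{m}$ arising exactly from $\log\frac{1-t_1}{1-u_2}=-\log\frac{1-u_2}{1-t_1}$.

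Next, for each triple $(c,m,d)$ I would unfold the three logarithmic powers into nested simplex integrals by means of the elementary identities
\[
  \frac{1}{k!}\left(\log\frac{y}{x}\right)^{k}=\int_{x<s_1<\cdots<s_k<y}\prod_{i=1}^{k}\frac{ds_i}{s_i},\qquad
  \frac{1}{k!}\left(\log\frac{1-x}{1-y}\right)^{k}=\int_{x<s_1<\cdots<s_k<y}\prod_{i=1}^{k}\frac{ds_i}{1-s_i},
\]
valid here because $u_1<u_2$, $u_2<t_1$, $t_1<t_2$ keep every logarithm positive. This turns the $(c,m,d)$-term into an iterated integral over the $(p+4)$-dimensional simplex
\[
  0<u_1<s_1<\cdots<s_c<u_2<r_1<\cdots<r_m<t_1<q_1<\cdots<q_d<t_2<1,
\]
whose differential word, read in increasing order of the variables, is $a\,b^{c+1}\,a^{m+1}\,b^{d+1}$ with $a=\frac{dt}{1-t}$ and $b=\frac{dt}{t}$. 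By Kontsevich's iterated-integral representation recalled in Section~\ref{sec1}, this word is precisely $\zeta(c+2,\{1\}^{m},d+2)$, and summing over $c+m+d=p$ with the weights $(-1)^{m}$ produces the claimed formula. Equivalently, and in parallel with the evaluation of $\mathbb J(1)$, one may view $\frac{1}{c!}\bigl(\log\frac{u_2}{u_1}\bigr)^{c}\frac{du_1\,du_2}{(1-u_1)u_2}$ as the innermost block contributing the first index $c+2$, $\frac{1}{d!}\bigl(\log\frac{t_2}{t_1}\bigr)^{d}\frac{dt_1\,dt_2}{(1-t_1)t_2}$ as the outermost block contributing the last index $d+2$, and the sandwiched factor $\frac{1}{m!}\bigl(\log\frac{1-u_2}{1-t_1}\bigr)^{m}$ as contributing the string $\{1\}^{m}$.

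I expect no genuine obstacle; the work is purely careful bookkeeping. The two points needing attention are the sign, which comes precisely from the inequality $u_2<t_1$ valid on $D_2$, and the identification of the assembled word $a\,b^{c+1}\,a^{m+1}\,b^{d+1}$ with $\zeta(c+2,\{1\}^{m},d+2)$. Note finally that, since on $D_2$ each pair of variables meets a logarithm of only one type while the cross term $\log\frac{1-u_2}{1-t_1}$ is purely of ``$a$''-type, the evaluation collapses to a single multiple zeta value per term, rather than to a sum-formula expression as happened for $\mathbb J(1)$.
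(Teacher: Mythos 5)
Your proposal is correct and follows essentially the same route as the paper: rewrite $\log\frac{1-t_1}{1-u_2}$ as $-\log\frac{1-u_2}{1-t_1}$ on $D_2$, expand by the multinomial theorem to extract the sign $(-1)^m$, and read off $\zeta(c+2,\{1\}^m,d+2)$ from each term. The paper's proof is simply terser, stopping at the multinomial expansion and asserting the identification, whereas you make explicit the unfolding of the logarithm powers into nested simplex integrals and the resulting word $a\,b^{c+1}a^{m+1}b^{d+1}$ --- a correct and worthwhile elaboration, but not a different argument.
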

\begin{proof}
On $D_{2}: 0 < u_{1} < u_{2} < t_{1} < t_{2} < 1$, the factor
\[
  \log \frac{1-t_{1}}{1-u_{2}}
\]
is replaced by
\[
  -\log \frac{1-u_{2}}{1-t_{1}}
\]
and the integrand
\[
  \frac{1}{p!} \left( -\log \frac{1-u_{2}}{1-t_{1}} + \log \frac{t_{2}}{t_{1}}
    + \log \frac{u_{2}}{u_{1}} \right)^p
\]
is expanded into
\[
  \sum_{c+d+m=p} \frac{(-1)^{m}}{c!\,d!\,m!}
    \left( \log \frac{1-u_{2}}{1-t_{1}} \right)^{m}
    \left( \log \frac{u_{2}}{u_{1}} \right)^{c}
    \left( \log \frac{t_{2}}{t_{1}} \right)^{d}.
\]
So that
\[
  \mathbb J(2) = \sum_{c+d+m=p} (-1)^m\zeta(c+2,\{1\}^{m},d+2). \qedhere
\]
\end{proof}
It is noting that we have another expression of $\mathbb J(2)$ in \cite[Theorem 3]{CE2022}:
\[
\sum_{a+b+m=p}(-1)^m\zeta(a+2,\{1\}^m,b+2)=2[(-1)^p-1]\zeta(p+2,\ov{2})
+\left(1-2^{-p-2}\right)\zeta(p+4).
\]
%
\section{The Integrals $\mathbb J(3), \mathbb J(4), \mathbb J(5)$, and $\mathbb J(6)$ Parts}
\label{sec.4}
%
In this section, we will give the evaluations of $\mathbb J(j)$ for $3\leq j\leq 6$.
These expressions give us the part of the weighted sum of our main theorem.
Our method of proving the following proposition is inspired by the 
approach used in \cite[Theorem 5.3]{OEL2013}.

\begin{proposition}
Let $p$ be a nonnegative integer. We have
\[
  \mathbb J(3)
  = \sum_{m+n=p} \sum_{|\boldsymbol{\alpha}|=p+3}
    \zeta(\alpha_{0}, \alpha_{1}, \ldots, \alpha_{m}, \alpha_{m+1}+1) \sum_{a+b+c=m} W(a,b,c)
\]
with $\sigma(r) = \alpha_{0} + \alpha_{1} + \cdots + \alpha_{r}$ and
\begin{equation}\label{eq.w}
  W(a,b,c)
  = 2^{\sigma(a+b+1)-\sigma(a)-(b+1)} (1-2^{1-\alpha_{a+b+1}}).
\end{equation}
\end{proposition}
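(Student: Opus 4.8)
The plan is to evaluate $\mathbb J(3)$ on the simplex $D_3: 0 < t_1 < u_1 < t_2 < u_2 < 1$ by the same expansion-and-recognition strategy used for $\mathbb J(1)$, but now the interleaving of the $t$'s and $u$'s forces a more delicate bookkeeping of the logarithmic factors. First I would rewrite the integrand $\tfrac1{p!}(\log\tfrac{1-t_1}{1-u_2} + \log\tfrac{t_2}{t_1} + \log\tfrac{u_2}{u_1})^p$ using the chain of intermediate points dictated by $D_3$. Concretely, on $D_3$ one has the telescoping identities
\[
  \log \frac{1-t_1}{1-u_2} = \log\frac{1-t_1}{1-u_1} + \log\frac{1-u_1}{1-t_2} + \log\frac{1-t_2}{1-u_2},
  \qquad
  \log\frac{u_2}{u_1} = \log\frac{u_2}{t_2} + \log\frac{t_2}{u_1},
\]
and similarly $\log\tfrac{t_2}{t_1} = \log\tfrac{t_2}{u_1}+\log\tfrac{u_1}{t_1}$, so that after a multinomial expansion the integrand becomes a sum over exponent vectors of products of logarithms of consecutive ratios along the flag $t_1 < u_1 < t_2 < u_2$. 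The measure $\tfrac{dt_1 dt_2}{(1-t_1)t_2}\tfrac{du_1 du_2}{(1-u_1)u_2}$ contributes the weight-forms $\tfrac{dt_1}{1-t_1}$, $\tfrac{dt_2}{t_2}$, $\tfrac{du_1}{1-u_1}$, $\tfrac{du_2}{u_2}$, which sit at positions $1,3,2,4$ of the flag; recognizing each resulting integral via the integral representation in Proposition~2.1 (the $p=\ell=0$ specialization) and its iterated-integral refinement will turn each term into a multiple zeta value $\zeta(\alpha_0,\alpha_1,\dots,\alpha_m,\alpha_{m+1}+1)$ of weight $p+3$.

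The key combinatorial step is to identify exactly which exponent patterns survive and to collect them. After the expansion one gets five free nonnegative exponents, and the constraint that the $t$-variables and $u$-variables interleave in the prescribed order collapses the five-fold sum to the shape $\sum_{m+n=p}\sum_{a+b+c=m}$ with $\bm\alpha$ ranging over compositions of $p+3$: here $m$ records the total exponent on the "$1/(1-\cdot)$"-type factors feeding the initial block, and $a,b,c$ record how that mass is split across the three telescoped pieces of $\log\tfrac{1-t_1}{1-u_2}$. The coefficient $W(a,b,c)$ arises because, when a log of the form $\log\tfrac{1-x}{1-y}$ with $x<y$ is re-expanded against the internal nodes $u_1,t_2$, each node can be "crossed" either through a $(1-\cdot)$-type ratio or contribute a factor of $2$ coming from the doubling $u_2 \mapsto$ (two copies) in the original integrand structure --- tracking these crossings produces precisely $2^{\sigma(a+b+1)-\sigma(a)-(b+1)}$, while the boundary term at node $a+b+1$ that can be either "$1/(1-t)$" or "$1/t$" produces the factor $(1-2^{1-\alpha_{a+b+1}})$ via the standard identity $\int_0^1 \tfrac{dt}{t}\big(\log\tfrac1t\big)^{k-1}\cdot(\text{geometric series in }2) $. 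I would make this bijection precise by writing the generic term of the expansion, substituting $x\mapsto$ the flag coordinates, and matching the exponent of $2$ to $\sigma(a+b+1)-\sigma(a)-(b+1)$ directly from the definition $\sigma(r)=\sum_{j=0}^r\alpha_j$.

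The main obstacle will be step two: controlling the doubling/sign bookkeeping when re-expanding $\log\tfrac{1-t_1}{1-u_2}$ across the interior nodes of $D_3$ while simultaneously tracking how the measure-forms at positions $1,2,3,4$ of the flag distribute among the blocks, so that the final answer genuinely reassembles into $\zeta(\alpha_0,\dots,\alpha_m,\alpha_{m+1}+1)$ with weight $p+3$ and depth $m+2$, rather than some other shape. A clean way to organize this is to first prove the identity at the level of generating functions --- replace the power $p$ by a formal variable and integrate $(1-t_1)^x\big(\tfrac{t_1}{t_2}\big)^y\cdots$ over $D_3$ as in the computation of $\sum Z_+^\star(n)(-x)^n$ in Section~2 --- and only then extract the coefficient of the relevant monomial, since the generating-function integral over $D_3$ factors through Beta-type integrals whose expansion transparently yields the $2^{(\cdots)}(1-2^{1-\alpha_{a+b+1}})$ weight. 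Once $\mathbb J(3)$ is settled, $\mathbb J(4),\mathbb J(5),\mathbb J(6)$ follow by the identical argument applied to the flags $t_1<u_1<u_2<t_2$, $u_1<t_1<u_2<t_2$, $u_1<t_1<t_2<u_2$, producing respectively the $W_{\bm\alpha}(a,b,c{=}0)$, $W_{\bm\alpha}(a{=}0,b,c)$, and $W_{\bm\alpha}(a{=}0,b{=}m,c{=}0)$ pieces (degenerations of $W$ where one or two of the interior telescoping slots is forced empty because the corresponding pair of variables is adjacent in the flag), and summing $\mathbb J(3)+\mathbb J(4)+\mathbb J(5)+\mathbb J(6)$ gives the weighted sum in the Main Theorem.
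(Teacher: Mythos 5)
Your plan is essentially the paper's proof: on $D_3$ you telescope $\log\frac{1-t_1}{1-u_2}$ along the flag $t_1<u_1<t_2<u_2$ and split the other two logarithms at the interior nodes, producing the doubled term $2\log\frac{t_2}{u_1}$ whose multinomial exponent supplies the power of $2$; the weight $W(a,b,c)$ then falls out exactly as in the paper by re-collecting the three compositions $\bm\alpha,\bm\beta,\bm\gamma$ into a single one and summing the geometric series over the splittings of the merged entry $\alpha_{a+b+1}=\beta_b+\gamma_0$, which gives $2^{\alpha_{a+b+1}-1}-1$, i.e.\ the factor $(1-2^{1-\alpha_{a+b+1}})$. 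One peripheral slip in your closing paragraph: the degenerate weights for $\mathbb J(5)$ and $\mathbb J(6)$ are interchanged --- the flag $u_1<t_1<u_2<t_2$ has no flag point strictly between $t_1$ and $u_2$ and yields the single term $W_{\bm\alpha}(0,m,0)$, while $u_1<t_1<t_2<u_2$ yields $\sum_{b+c=m}W_{\bm\alpha}(0,b,c)$ --- though this affects neither the proposition at hand nor the total $\sum_{j=3}^{6}\mathbb J(j)$.
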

\begin{proof}
The integral $\mathbb J(3)$ is on $D_3: 0<t_1<u_1<t_2<u_2<1$.
Therefore we replace the integrand of $\mathbb J(3)$ by
\[
  \frac{1}{p!} \left( \log \frac{1-t_{1}}{1-u_{1}}
    + \log \frac{1-u_{1}}{1-t_{2}} + \log \frac{1-t_{2}}{1-u_{2}}
    + \log \frac{u_{1}}{t_{1}} + 2\log \frac{t_{2}}{u_{1}}
    + \log \frac{u_{2}}{t_{2}} \right)^{p}.
\]
Then we expand it as
\[
  \sum_{\substack{m+n=p\\ a+b+c=m\\ u+v+w=n}} \frac{2^{v}}{a!b!c!u!v!w!} 
  \left( \log \frac{1-t_{1}}{1-u_{1}} \right)^{a}
    \left( \log \frac{1-u_{1}}{1-t_{2}} \right)^{b}
    \left( \log \frac{1-t_{2}}{1-u_{2}} \right)^{c}
    \left( \log \frac{u_{1}}{t_{1}} \right)^{u}
    \left( \log \frac{t_{2}}{u_{1}} \right)^{v}
    \left( \log \frac{u_{2}}{t_{2}} \right)^{w}.
\]
This yields
\[
  \mathbb J(3)
  = \sum_{\substack{m+n=p\\ a+b+c=m\\u+v+w=n}} 2^{v}
    \sum_{\substack{|\boldsymbol{\alpha}|=a+u+1 \\ |\boldsymbol{\beta}|=b+v+1 \\ |\boldsymbol{\gamma}|=c+w+1}}
    \zeta(\alpha_{0}, \alpha_{1}, \ldots, \alpha_{a}, \beta_{0}, \beta_{1}, \ldots, \beta_{b}+\gamma_{0}, \gamma_{1}, \ldots, \gamma_{c}+1).
\]
We change the variables $\bm{\alpha,\beta,\gamma}$ to a nonnegative vector variable
$\bm e=(e_0,e_1,\ldots,e_{m+1})$. Then 

\begin{align*}
\mathbb J(3)&=\sum_{\substack{m+n=p\\a+b+c=m\\|\bm e|=n}}
\left(\sum_{i=0}^{e_{a+b+1}}2^{e_{a+b+1}-i}\right)
2^{e_{a+1}+e_{a+2}+\cdots+e_{a+b}} \\
&\qquad\qquad\qquad\times
\zeta(e_0+1,\ldots,e_{a+b}+1,e_{a+b+1}+2,e_{a+b+2}+1,\ldots,e_{m}+1,e_{m+1}+2).
\end{align*}

\noindent Since 
\[
\sum_{i=0}^{e_{a+b+1}}2^{e_{a+b+1}-i}=2^{e_{a+b+1}+1}-1,
\]
we can change the nonnegative vector variable $\bm e$ to a new positive variable $\bm \alpha$
and $\mathbb J(3)$ becomes
\[
\sum_{\substack{m+n=p\\a+b+c=m\\|\bm\alpha|=m+n+3}}
2^{\alpha_{a+1}+\cdots+\alpha_{a+b}-b}\left(2^{\alpha_{a+b+1}-1}-1\right)
\zeta(\alpha_0,\ldots,\alpha_m,\alpha_{m+1}+1).
\]
We rewrite $\mathbb J(3)$ as
\[
  \sum_{m+n=p} \sum_{|\boldsymbol{\alpha}|=p+3}
    \zeta(\alpha_{0}, \alpha_{1}, \ldots, \alpha_{m}, \alpha_{m+1}+1) \sum_{a+b+c=m} 
    W_{\bm\alpha}(a,b,c)
\]
with $\sigma(r) = \alpha_{0} + \alpha_{1} + \cdots + \alpha_{r}$ and
\[
  W_{\bm\alpha}(a,b,c)
  = 2^{\sigma(a+b+1)-\sigma(a)-(b+1)} (1-2^{1-\alpha_{a+b+1}}). \qedhere
\]
\end{proof}

In the same manner as the previous proposition, we obtain the following.

\begin{proposition}
Notation as introduced above, we have
\[
  \mathbb J(4)
  = \sum_{m+n=p} \sum_{|\boldsymbol{\alpha}|=p+3}
    \zeta(\alpha_{0}, \alpha_{1}, \ldots, \alpha_{m}, \alpha_{m+1}+1) \sum_{a+b=m} 
    W_{\bm\alpha}(a,b,0),
\]
where $W_{\bm\alpha}(a,b,c)$ is defined in Eq.\,(\ref{eq.w}) and 
\[
  W_{\bm\alpha}(a,b,0)
  = 2^{\sigma(a+b+1)-\sigma(a)-(b+1)} (1-2^{1-\alpha_{a+b+1}}).
\]
\end{proposition}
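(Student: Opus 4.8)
The plan is to follow the $\mathbb J(3)$ argument verbatim, making only the bookkeeping changes forced by the different geometry of $D_4$. First I would write the chain of denominators along $D_4: 0<t_1<u_1<u_2<t_2<1$. As in the previous proposition, the integrand factor $\log\frac{1-t_1}{1-u_2}$ is rewritten as a telescoping sum over the four ordered endpoints $1-t_1>1-u_1>1-u_2>1-t_2$, namely $\log\frac{1-t_1}{1-u_1}+\log\frac{1-u_1}{1-u_2}+\log\frac{1-u_2}{1-t_2}$, while the two $\log$-ratios $\log\frac{t_2}{t_1}$ and $\log\frac{u_2}{u_1}$ must be re-expressed as sums of consecutive log-ratios of the \emph{merged} chain $0<t_1<u_1<u_2<t_2<1$. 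On this chain, $\log\frac{t_2}{t_1}=\log\frac{u_1}{t_1}+\log\frac{u_2}{u_1}+\log\frac{t_2}{u_2}$ and $\log\frac{u_2}{u_1}=\log\frac{u_2}{u_1}$, so the total $t$-side plus $u$-side combines to $\log\frac{u_1}{t_1}+2\log\frac{u_2}{u_1}+\log\frac{t_2}{u_2}$ — note the coefficient $2$ now sits on the middle segment $\log\frac{u_2}{u_1}$, not on a segment spanning $c$ extra variables, which is exactly why the eventual $c$-block collapses.

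Next I would expand the $p$-th power multinomially, grouping the exponents so that the $1-t_1,1-u_1$ block (sizes $a$) pairs with $\log\frac{u_1}{t_1}$ (size $u$), the $1-u_1,1-u_2$ block (size $b$) pairs with $\log\frac{u_2}{u_1}$, and the $1-u_2,1-t_2$ block (size $c$) pairs with $\log\frac{t_2}{u_2}$ (size $w$); the factor $2^{v}$ from the doubled middle segment appears with $v$ the exponent of $\log\frac{u_2}{u_1}$. Applying the integral representation of Proposition (the EW2008 one) segment-by-segment turns this into a sum of multiple zeta values indexed by three vectors $\bm\alpha,\bm\beta,\bm\gamma$ with $|\bm\alpha|=a+u+1$, $|\bm\beta|=b+v+1$, $|\bm\gamma|=c+w+1$, glued at the two junctions. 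Then I repeat the substitution $\bm\alpha,\bm\beta,\bm\gamma\mapsto\bm e$ and the identity $\sum_{i=0}^{e}2^{e-i}=2^{e+1}-1$ to pass to a positive vector $\bm\alpha$ of weight $p+3$, landing on the prefactor $2^{\sigma(a+b+1)-\sigma(a)-(b+1)}(1-2^{1-\alpha_{a+b+1}})=W_{\bm\alpha}(a,b,c)$.

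The one genuine difference from the $\mathbb J(3)$ computation — and the step I expect to require the most care — is showing that the $c$-summation degenerates to $c=0$, i.e. that only $W_{\bm\alpha}(a,b,0)$ survives. The reason is structural: on $D_4$ the two innermost endpoints $u_1,u_2$ are \emph{adjacent} in the merged chain, so the block of differences $\log\frac{1-u_2}{1-t_2}$ (size $c$) is matched against $\log\frac{t_2}{u_2}$ (size $w$) with \emph{no} intervening doubled segment, and upon gluing, the $\gamma$-part attaches to the tail $\ldots,\alpha_m,\alpha_{m+1}+1$ purely additively without contributing any extra power of $2$; re-indexing absorbs $c$ into the range of $a$ and $b$ with $a+b=m$, so the triple sum $\sum_{a+b+c=m}$ collapses to $\sum_{a+b=m}$ with the stated $W_{\bm\alpha}(a,b,0)$. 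Once this collapse is made explicit the rest is identical bookkeeping to the previous proof, so I would present the derivation compactly and state ``in the same manner, with the $c$-block now adjacent to the tail'' as the operative remark.
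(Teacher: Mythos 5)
Your overall strategy (repeat the $\mathbb J(3)$ computation on the chain $0<t_1<u_1<u_2<t_2<1$) is the right one — indeed the paper gives no separate proof, saying only ``in the same manner as the previous proposition.'' But your first step is wrong, and the error propagates. On $D_4$ the factor to be telescoped is $\log\frac{1-t_1}{1-u_2}$, whose endpoints are $1-t_1$ and $1-u_2$; since $u_2$ is an \emph{interior} point of the merged chain, the telescoping stops there and has only two segments,
\[
  \log\frac{1-t_1}{1-u_2}=\log\frac{1-t_1}{1-u_1}+\log\frac{1-u_1}{1-u_2}.
\]
Your three-term expression $\log\frac{1-t_1}{1-u_1}+\log\frac{1-u_1}{1-u_2}+\log\frac{1-u_2}{1-t_2}$ telescopes to $\log\frac{1-t_1}{1-t_2}$, which is a different (and on $D_4$ strictly larger) quantity. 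Expanding the $p$-th power of your five-term sum therefore computes a different integral — in fact exactly the integral $\mathbb I(p;4)$ of Section 6, which the paper notes equals $\mathbb J(3)$, not $\mathbb J(4)$. So if your bookkeeping were carried through honestly you would land on $\sum_{a+b+c=m}W_{\bm\alpha}(a,b,c)$ again, not on $\sum_{a+b=m}W_{\bm\alpha}(a,b,0)$.

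Relatedly, your explanation of why the $c$-sum ``collapses'' is not a valid argument and is covering for the error above. If a block $\bigl(\log\frac{1-u_2}{1-t_2}\bigr)^c$ were genuinely present, it would contribute $c$ additional \emph{entries} (extra depth) to the resulting multiple zeta values, exactly as the $\bm\gamma$-block does in the $\mathbb J(3)$ proof; it would not ``attach to the tail purely additively,'' and re-indexing cannot absorb it into $a+b=m$ because $m$ is fixed as the total exponent of the $(1-x)$-type logarithms, which determines the depth $m+2$. The true reason only $c=0$ survives is simply that there is no $c$-block at all: the correct integrand on $D_4$ is $\frac{1}{p!}\bigl(\log\frac{1-t_1}{1-u_1}+\log\frac{1-u_1}{1-u_2}+\log\frac{u_1}{t_1}+2\log\frac{u_2}{u_1}+\log\frac{t_2}{u_2}\bigr)^p$, with only the trailing factor $\bigl(\log\frac{t_2}{u_2}\bigr)^w$ beyond $u_2$; that factor, together with the final $\frac{dt_2}{t_2}$, appends $w+2$ to the last exponent, and summing $2^{\beta_b-1}$ over $\beta_b\ge 1$, $w\ge 0$ with $\beta_b+w+1=\alpha_{m+1}$ produces the factor $2^{\alpha_{m+1}-1}-1$, i.e.\ exactly $W_{\bm\alpha}(a,b,0)$. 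With the two-segment decomposition in place, the rest of your outline does go through verbatim.
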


\begin{proposition}
Notation as introduced above, then we have
\[
  \mathbb J(5)
  = \sum_{m+n=p} \sum_{|\boldsymbol{\alpha}|=p+3}
    \zeta(\alpha_{0}, \alpha_{1}, \ldots, \alpha_{m}, \alpha_{m+1}+1) 
    W_{\bm\alpha}(0,m,0),
\]
where 
\[
  W_{\bm\alpha}(0,m,0)
  = 2^{\sigma(m+1)-\sigma(0)-(m+1)} (1-2^{1-\alpha_{m+1}})
  = 2^{n+2-\sigma(0)} (1-2^{1-\alpha_{m+1}}).
\]
\end{proposition}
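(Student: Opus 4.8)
I would follow the same route as in the proof for $\mathbb J(3)$; the one structural difference is that on $D_5$ the points $t_1$ and $u_2$ are consecutive in the chain $u_1<t_1<u_2<t_2$, so the logarithm $\log\frac{1-t_1}{1-u_2}$ cannot be telescoped any further, and only one of the three interior indices $a,b,c$ survives (namely $b=m$, with $a=c=0$) — this is exactly why the weight reduces to $W_{\bm\alpha}(0,m,0)$ alone. First I would rewrite the integrand on $D_5$ using $\log\frac{u_2}{u_1}=\log\frac{t_1}{u_1}+\log\frac{u_2}{t_1}$ and $\log\frac{t_2}{t_1}=\log\frac{u_2}{t_1}+\log\frac{t_2}{u_2}$, so the bracket becomes $\log\frac{1-t_1}{1-u_2}+\log\frac{t_1}{u_1}+2\log\frac{u_2}{t_1}+\log\frac{t_2}{u_2}$, and then expand $\frac1{p!}(\cdots)^{p}$ by the multinomial theorem as
\[
\sum_{m+u+v+w=p}\frac{2^{v}}{m!\,u!\,v!\,w!}\Bigl(\log\tfrac{1-t_1}{1-u_2}\Bigr)^{m}\Bigl(\log\tfrac{t_1}{u_1}\Bigr)^{u}\Bigl(\log\tfrac{u_2}{t_1}\Bigr)^{v}\Bigl(\log\tfrac{t_2}{u_2}\Bigr)^{w}.
\]

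Against the measure $\tfrac{du_1}{1-u_1}\tfrac{dt_1}{1-t_1}\tfrac{du_2}{u_2}\tfrac{dt_2}{t_2}$ along the chain, the factor $\tfrac1{m!}(\log\tfrac{1-t_1}{1-u_2})^{m}$ inserts $m$ one-forms $\tfrac{d\tau}{1-\tau}$ into the gap $(t_1,u_2)$, $\tfrac1{u!}(\log\tfrac{t_1}{u_1})^{u}$ inserts $u$ one-forms $\tfrac{d\tau}{\tau}$ into $(u_1,t_1)$, $\tfrac1{v!}(\log\tfrac{u_2}{t_1})^{v}$ inserts $v$ one-forms $\tfrac{d\tau}{\tau}$ into $(t_1,u_2)$, and $\tfrac1{w!}(\log\tfrac{t_2}{u_2})^{w}$ inserts $w$ one-forms $\tfrac{d\tau}{\tau}$ into $(u_2,t_2)$; integration over the resulting simplex then sums over all interleavings of the $m$ forms $\tfrac{d\tau}{1-\tau}$ with the $v$ forms $\tfrac{d\tau}{\tau}$ inside $(t_1,u_2)$, the other two gaps carrying only forms $\tfrac{d\tau}{\tau}$. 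Reading off the word $a\,b^{u}\,a\,[\,\text{interleave of }a^{m}\text{ and }b^{v}\,]\,b\,b^{w}\,b$ and invoking the integral representation of \cite{EW2008} (exactly as for $\mathbb J(3)$), I obtain, with $g_1,\dots,g_{m+1}\ge 0$ recording how the $v$ forms $\tfrac{d\tau}{\tau}$ split around the $m$ forms $\tfrac{d\tau}{1-\tau}$ in $(t_1,u_2)$,
\[
\mathbb J(5)=\sum_{m+u+v+w=p}2^{v}\sum_{g_1+\cdots+g_{m+1}=v}\zeta\bigl(u+1,\,g_1+1,\ldots,g_m+1,\,g_{m+1}+w+3\bigr).
\]

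Finally I would repackage the indices into one nonnegative vector $\bm e=(e_0,\ldots,e_{m+1})$ by $e_0=u$, $e_i=g_i$ $(1\le i\le m)$ and $e_{m+1}=g_{m+1}+w+1$: then $|\bm e|=n+1$ with $n=p-m$, the zeta value is $\zeta(e_0+1,\ldots,e_m+1,e_{m+1}+2)$, and for fixed $\bm e$ the pair $(g_{m+1},w)$ ranges over all $g_{m+1},w\ge 0$ with $g_{m+1}+w=e_{m+1}-1$, so that
\[
\sum 2^{v}=2^{e_1+\cdots+e_m}\sum_{g_{m+1}=0}^{e_{m+1}-1}2^{g_{m+1}}=2^{e_1+\cdots+e_m}\bigl(2^{e_{m+1}}-1\bigr),
\]
which is $0$ precisely when $e_{m+1}=0$, so no extra constraint on $e_{m+1}$ is needed. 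Replacing $\bm e$ by the positive vector $\bm\alpha$ with $\alpha_i=e_i+1$ turns $|\bm e|=n+1$ into $|\bm\alpha|=p+3$, the zeta value into $\zeta(\alpha_0,\ldots,\alpha_m,\alpha_{m+1}+1)$, and the coefficient into $2^{(\alpha_1-1)+\cdots+(\alpha_m-1)}(2^{\alpha_{m+1}-1}-1)=2^{\sigma(m+1)-\sigma(0)-(m+1)}(1-2^{1-\alpha_{m+1}})=W_{\bm\alpha}(0,m,0)$; summing over $m$ yields the assertion. The step requiring the most care is this last bookkeeping — matching the data $(m;u,v,w;g_1,\ldots,g_{m+1})$ to the components of $\bm\alpha$ and checking that the geometric sum collapses to exactly $W_{\bm\alpha}(0,m,0)$; the reduction of the integrand to a sum of multiple zeta values is the same mechanism as in the $\mathbb J(3)$ argument, the only subtlety being that here a single gap, $(t_1,u_2)$, receives both families of one-forms.
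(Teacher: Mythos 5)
Your argument is correct and is exactly the proof the paper intends: the paper proves only the $\mathbb J(3)$ case explicitly and dispatches $\mathbb J(5)$ with ``in the same manner,'' and your telescoping of the logarithms along the chain $u_1<t_1<u_2<t_2$, the factor $2^v$ from the doubled gap $(t_1,u_2)$, and the final reindexing to $\bm\alpha$ with the geometric sum collapsing to $2^{\alpha_1+\cdots+\alpha_m-m}(2^{\alpha_{m+1}-1}-1)=W_{\bm\alpha}(0,m,0)$ all check out. Your observation that $t_1$ and $u_2$ being adjacent forces $a=c=0$ is precisely the right explanation for why only the single weight $W_{\bm\alpha}(0,m,0)$ survives here.
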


\begin{proposition}
Notation as introduced above, then we have
\[
  \mathbb J(6)
  = \sum_{m+n=p} \sum_{|\boldsymbol{\alpha}|=p+3}
    \zeta(\alpha_{0}, \alpha_{1}, \ldots, \alpha_{m}, \alpha_{m+1}+1) \sum_{b+c=m} 
    W_{\bm\alpha}(0,b,c)
\]
with $\sigma(r) = \alpha_{0} + \alpha_{1} + \cdots + \alpha_{r}$ and
\[
  W_{\bm\alpha}(0,b,c)
  = 2^{\sigma(b+1)-\sigma(0)-(b+1)} (1-2^{1-\alpha_{b+1}}).
\]
\end{proposition}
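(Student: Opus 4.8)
The plan is to repeat the computation that evaluated $\mathbb J(3)$ almost word for word, since on $D_6$ the four anchoring forms $\tfrac{du_1}{1-u_1}$, $\tfrac{dt_1}{1-t_1}$, $\tfrac{dt_2}{t_2}$, $\tfrac{du_2}{u_2}$ occur in the same pattern $\bigl(\tfrac{d\sigma}{1-\sigma},\tfrac{d\sigma}{1-\sigma},\tfrac{d\sigma}{\sigma},\tfrac{d\sigma}{\sigma}\bigr)$ as on $D_3$. First, on $D_6: 0<u_1<t_1<t_2<u_2<1$ I would telescope the logarithmic factor of the integrand along the chain $u_1<t_1<t_2<u_2$, obtaining
\[
  \log\frac{1-t_1}{1-u_2}+\log\frac{t_2}{t_1}+\log\frac{u_2}{u_1}
  =\log\frac{t_1}{u_1}+\log\frac{1-t_1}{1-t_2}+2\log\frac{t_2}{t_1}
   +\log\frac{1-t_2}{1-u_2}+\log\frac{u_2}{t_2}.
\]
The single structural difference from the $D_3$ case is already visible here: because $u_1$ is the smallest of the four variables while $\log\frac{1-t_1}{1-u_2}$ telescopes only down to $1-t_1$, the factor $\log\frac{1-u_1}{1-t_1}$ never occurs. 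In the $\mathbb J(3)$ computation it was precisely the exponent of the analogous factor that produced the running index $a$; its absence here forces $a=0$ and replaces $\sum_{a+b+c=m}$ by $\sum_{b+c=m}$.

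Second, I would raise the right-hand side to the $p$-th power, expand it multinomially with exponents $u,b,v,c,w$ (in the order displayed), pull out the resulting $2^{v}$, and set $m=b+c$, $n=u+v+w$, $m+n=p$. Grouping the forms that fall between consecutive anchors and applying the integral representation of \cite{EW2008} — together with the fact that a product of two iterated integrals over a common interval equals the shuffle of the two integrands — rewrites $\mathbb J(6)$ as a sum of multiple zeta values: the anchor $\tfrac{du_1}{1-u_1}$ followed by $u$ forms $\tfrac{d\sigma}{\sigma}$ contributes one argument $u+1$; the anchor $\tfrac{dt_1}{1-t_1}$, interleaved in all ways with $b$ forms $\tfrac{d\sigma}{1-\sigma}$ and $v$ forms $\tfrac{d\sigma}{\sigma}$, contributes a block $\beta_0,\dots,\beta_b$ with $|\bm\beta|=b+v+1$; the anchor $\tfrac{dt_2}{t_2}$, being of type $\tfrac{d\sigma}{\sigma}$, merges into the last argument $\beta_b$; and the remaining interleaving, closed off by the final anchor $\tfrac{du_2}{u_2}$, contributes a block $\gamma_0,\dots,\gamma_c$ with $|\bm\gamma|=c+w+1$. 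This gives
\[
  \mathbb J(6)=\sum_{\substack{m+n=p\\ b+c=m\\ u+v+w=n}}2^{v}
  \sum_{\substack{|\bm\beta|=b+v+1\\ |\bm\gamma|=c+w+1}}
  \zeta\bigl(u+1,\beta_0,\dots,\beta_b+\gamma_0,\gamma_1,\dots,\gamma_c+1\bigr).
\]

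Third, exactly as in the $\mathbb J(3)$ argument, I would fold $u,\bm\beta,\bm\gamma$ into a single nonnegative vector $\bm e=(e_0,\dots,e_{m+1})$, the merging anchor making the argument at position $b+1$ equal to $e_{b+1}+2$. Summing over the ways the $\tfrac{d\sigma}{\sigma}$-forms lying to the left of that anchor split between the $2$-weighted $\beta$-side and the unweighted $\gamma$-side produces the geometric sum $\sum_{i=0}^{e_{b+1}}2^{e_{b+1}-i}=2^{e_{b+1}+1}-1$, while the surviving weights contribute $2^{e_1+\cdots+e_b}$. Passing to the positive vector $\bm\alpha$ (the $j$-th argument of the zeta value, with its last entry lowered by $1$, so $|\bm\alpha|=p+3$) and simplifying,
\begin{align*}
  2^{e_1+\cdots+e_b}\bigl(2^{e_{b+1}+1}-1\bigr)
  &=2^{\alpha_1+\cdots+\alpha_b-b}\bigl(2^{\alpha_{b+1}-1}-1\bigr)\\
  &=2^{\sigma(b+1)-\sigma(0)-(b+1)}\bigl(1-2^{1-\alpha_{b+1}}\bigr)
   =W_{\bm\alpha}(0,b,c),
\end{align*}
which, after collecting the summations, is the asserted identity
\[
  \mathbb J(6)=\sum_{m+n=p}\sum_{|\bm\alpha|=p+3}
  \zeta(\alpha_0,\alpha_1,\dots,\alpha_m,\alpha_{m+1}+1)\sum_{b+c=m}W_{\bm\alpha}(0,b,c).
\]

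I expect the main obstacle to be the bookkeeping in the second step: deciding correctly which anchors open a new argument and which one merges, and keeping the distribution of the $\tfrac{d\sigma}{\sigma}$-forms — hence the powers of $2$ — consistent across the merging anchor, so that the exponent collapses exactly to $\sigma(b+1)-\sigma(0)-(b+1)$. Once that is pinned down, the rest is a transcription of the already-established $\mathbb J(3)$ computation, now with the first shuffle block degenerate.
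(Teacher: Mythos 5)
Your proposal is correct and is exactly the argument the paper intends: the paper omits the proof of this proposition with the remark that it follows ``in the same manner'' as the $\mathbb J(3)$ computation, and your write-up is precisely that adaptation --- the telescoping on $D_6$, the multinomial expansion with the $2^{v}$ weight, the degenerate first block forcing $a=0$, and the change of variables to $\bm\alpha$ all match, and the final simplification to $2^{\sigma(b+1)-\sigma(0)-(b+1)}(1-2^{1-\alpha_{b+1}})$ checks out. No gaps.
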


So up to now, our shuffle relation appeared to be the form

\begin{align*}
  J_{p}
  &= \mathbb J(1) + \mathbb J(2) \\
  &\quad + \sum_{m+n=p} \sum_{|\boldsymbol{\alpha}|=p+3}
    \zeta(\alpha_{0}, \alpha_{1}, \ldots, \alpha_{m}, \alpha_{m+1}+1) \\
  &\qquad \times\left( \sum_{a+b+c=m} W_{\bm\alpha}(a,b,c) 
  +\sum_{a+b=m}W_{\bm\alpha}(a,b,0) + W_{\bm\alpha}(0,m,0) 
  + \sum_{b+c=m}W_{\bm\alpha}(0,b,c)  \right).
\end{align*}

We write it in a compact form:

\begin{align}\label{eq.j12w}
  J_{p}\nonumber
  &= \mathbb J(1) + \mathbb J(2) \\ 
  &\quad + \sum_{m+n=p} \sum_{|\boldsymbol{\alpha}|=p+3}
    \zeta(\alpha_{0}, \alpha_{1}, \ldots, \alpha_{m}, \alpha_{m+1}+1) \\
  &\qquad \times \sum_{a+b+c=m} \Bigl\{ W_{\bm\alpha}(a,b,c)  \nonumber
  + W_{\bm\alpha}(a,b,c=0) + W_{\bm\alpha}(a=0,b=m,c=0) + W_{\bm\alpha}(a=0,b,c)  \Bigr\}.
\end{align}

\section{Another Way to Evaluate $\mathbb J(1)$ and $\mathbb J(2)$ Parts}\label{sec.5}
The modified Bell polynomials are defined by \cite{Chen2017, CC2010}
\[
  \exp \left\{ \sum_{k=1}^{\infty} \frac{x_{k} z^{k}}{k} \right\}
  = \sum_{m=0}^{\infty} P_{m}(x_{1},x_{2},\ldots,x_{m}) z^{m}.
\]
So that
\[
  P_{m}(x_{1},x_{2},\ldots,x_{m})
  = \sum_{k_{1}+2k_{2}+\cdots+mk_{m}=m} \frac{1}{k_{1}! k_{2}! \cdots k_{m}!}
    \left( \frac{x_{1}}{1} \right)^{k_{1}}
    \left( \frac{x_{2}}{2} \right)^{k_{2}} \cdots
    \left( \frac{x_{m}}{m} \right)^{k_{m}}.
\]
In particular, for $m = 0,1,2,3$,
\[
  P_{0} = 1, \quad
  P_{1}(x_{1}) = x_{1}, \quad
  P_{2}(x_{1},x_{2}) = \frac{1}{2} (x_{1}^{2}+x_{2}), \quad
  P_{3}(x_{1},x_{2},x_{3}) = \frac{1}{3} (x_{1}^{3}+3x_{1}x_{2}+2x_{3}).
\]
It is well-known that
\[
  P_{n}(\zeta(2), -\zeta(4), \ldots, (-1)^{n+1} \zeta(2n)) = \zeta(\{2\}^{n})
  \quad \textrm{and} \quad
  P_{n}(\zeta(2), \zeta(4), \ldots, \zeta(2n)) = \zeta^{\star}(\{2\}^{n}).
\]
In order to give another expression of $\mathbb J(1)$ we need the following lemma.
\begin{lemma}\cite[Proposition 4.4]{CCE2016}\label{lma.dd}
For a pair of positive integers $k_{1}$, $k_{2}$ with $k_{1} \leq k_{2}$, let
\[
  Q(y)
  = \frac{\Gamma(k_{1}-y)}{\Gamma(k_{1})}
    \frac{\Gamma(k_{2}+1)}{\Gamma(k_{2}+1-y)}.
\]
Then for any nonnegative integer $n$,
\[
  \frac{Q^{(n)}(0)}{n!}
  = P_{n}(h_{1},h_{2},\ldots,h_{n})
  = \sum_{k_{1} \leq \ell_{1} \leq \ell_{2} \leq \cdots \leq \ell_{n}
    \leq k_{2}} \frac{1}{\ell_{1} \ell_{2} \cdots \ell_{n}}
\]
with $h_{n} = \sum_{j=k_{1}}^{k_{2}} \frac{1}{j^{n}}$.
\end{lemma}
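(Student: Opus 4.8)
The plan is to first collapse the quotient of Gamma functions defining $Q(y)$ into a finite product. Applying the functional equation $\Gamma(z+1)=z\Gamma(z)$ a total of $k_2-k_1+1$ times (equivalently, by induction on $k_2-k_1$) one gets $\Gamma(k_2+1-y)=\bigl(\prod_{j=k_1}^{k_2}(j-y)\bigr)\Gamma(k_1-y)$ and, specializing $y=0$, $\Gamma(k_2+1)=\bigl(\prod_{j=k_1}^{k_2}j\bigr)\Gamma(k_1)$. Substituting both into the definition, the factors $\Gamma(k_1)$ and $\Gamma(k_1-y)$ cancel and
\[
  Q(y)=\prod_{j=k_1}^{k_2}\frac{j}{j-y}=\prod_{j=k_1}^{k_2}\frac{1}{1-y/j},
\]
which is analytic on the disc $|y|<k_1$ and hence has a convergent Taylor expansion at $y=0$.

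From this product formula the two asserted identities follow by two short, parallel coefficient extractions. For the equality $Q^{(n)}(0)/n!=P_n(h_1,\ldots,h_n)$, I would take the logarithm of the product and expand each term, $-\log(1-y/j)=\sum_{k\ge 1}(y/j)^k/k$, which gives
\[
  \log Q(y)=\sum_{k=1}^{\infty}\frac{y^k}{k}\sum_{j=k_1}^{k_2}\frac{1}{j^k}=\sum_{k=1}^{\infty}\frac{h_k\,y^k}{k}.
\]
Exponentiating and comparing with the generating identity defining the modified Bell polynomials (with $z=y$ and $x_k=h_k$) yields $Q(y)=\sum_{n\ge 0}P_n(h_1,\ldots,h_n)\,y^n$, hence the claim on Taylor coefficients. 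For the equality with the iterated sum, instead expand each factor of the finite product as a geometric series, $\frac{1}{1-y/j}=\sum_{m\ge 0}y^m/j^m$, and read off the coefficient of $y^n$: it equals $\sum_{m_{k_1}+\cdots+m_{k_2}=n}\prod_{j=k_1}^{k_2}j^{-m_j}$, and grouping a nondecreasing $n$-tuple $(\ell_1\le\cdots\le\ell_n)$ drawn from $\{k_1,\ldots,k_2\}$ according to the multiplicity $m_j$ of each value $j$ identifies this with $\sum_{k_1\le\ell_1\le\cdots\le\ell_n\le k_2}1/(\ell_1\cdots\ell_n)$. (Both expressions are the complete homogeneous symmetric function $h_n(1/k_1,\ldots,1/k_2)$, and the passage from the power-sum form to the monomial form is precisely the classical Newton--Girard relation.)

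I do not expect a genuine obstacle in this lemma; the only two points deserving a sentence of care are the telescoping of the Gamma quotient, which is a finite induction on $k_2-k_1$ using $\Gamma(z+1)=z\Gamma(z)$, and the fact that the logarithmic expansion above is valid only for $|y|<k_1$ — but this neighborhood of the origin is all that is needed to read off Taylor coefficients at $y=0$, so one may equally well argue with formal power series throughout. Accordingly, I would write the proof by first recording the product formula for $Q(y)$, then performing the logarithm-and-exponentiate computation for the first identity and the geometric-series expansion for the second.
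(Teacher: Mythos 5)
Your proof is correct. Note that the paper does not actually prove this lemma --- it is imported verbatim from \cite[Proposition 4.4]{CCE2016} --- so there is no in-paper argument to compare against; your route is the natural (and surely the intended) one. The telescoping $Q(y)=\prod_{j=k_1}^{k_2}(1-y/j)^{-1}$ is right, the log-and-exponentiate step matches exactly the generating-function definition of $P_m$ given at the start of Section 5 (with $x_k=h_k$, $z=y$), and the geometric-series expansion correctly identifies the coefficient of $y^n$ with the complete homogeneous symmetric function $\sum_{k_1\le\ell_1\le\cdots\le\ell_n\le k_2}1/(\ell_1\cdots\ell_n)$. Your remark that the analytic expansions are only needed on $|y|<k_1$, which suffices for Taylor coefficients at $0$, disposes of the only point requiring care.
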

\begin{proposition}
Let $p$ be a nonnegative integer and 
\[
  \mathbb J(1)
  = \sum_{m+n=p} \frac{1}{m!n!} \int_{0 < t_{1} < t_{2} < u_{1} < u_{2} < 1}
    \left( \log \frac{1-t_{1}}{1-u_{2}} \right)^{m}
    \left( \log \frac{t_{2}}{t_{1}} + \log \frac{u_{2}}{u_{1}} \right)^{n} \frac{dt_{1} dt_{2}}{(1-t_{1})t_{2}} \frac{du_{1} du_{2}}{(1-u_{1})u_{2}}.
\]
Then

\begin{align*}
  \mathbb J(1)
  &= \sum_{m+n=p}\sum_{c+d=n} \{ \zeta^{\star}(c+2,\{1\}^{m},d+2) - \zeta(m+n+4) \} \\
  &= \left\{ \frac{p(p+3)}{2} + \frac{p+3}{2^{p+2}} \right\} \zeta(p+4).
\end{align*}

\end{proposition}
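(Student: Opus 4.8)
The plan is to compute $\mathbb J(1)$ twice: once directly from the integral, producing the multiple-zeta-value series $\sum_{c+d=n}\zeta^\star(c+2,\{1\}^m,d+2)$ minus a correction, and once in closed form via the generating-function/Bell-polynomial machinery of Section \ref{sec.2} together with Lemma \ref{lma.dd}. Equating the two gives the claimed evaluation. First I would handle the integral representation. On the simplex $0<t_1<t_2<u_1<u_2<1$ the factor $\log\frac{1-t_1}{1-u_2}$ should be split as $\log\frac{1-t_1}{1-t_2}+\log\frac{1-t_2}{1-u_2}$, and the factor $\log\frac{t_2}{t_1}+\log\frac{u_2}{u_1}$ kept intact; expanding the $p$-th power by the multinomial theorem and collecting the $(t_1,t_2)$-block against the $(u_1,u_2)$-block, the inner $t$-integral over $0<t_1<t_2<u_1$ will produce a generating-function weight in $\log\frac{1}{1-u_1}$ and $\log u_1$, and the $u$-integral then assembles into $\zeta^\star(c+2,\{1\}^m,d+2)$; the subtracted $\zeta(m+n+4)=\zeta(p+4)$ term arises from the boundary/normalization term in converting the height-one star sum $Z_+^\star$-type block back to honest multiple zeta-star values with a $2$ in the first slot. (Alternatively one can read this first equality straight off the already-proven expression for $\mathbb J(1)$ as a sum of $\zeta(\alpha_0,\ldots,\alpha_a+1,\{1\}^b,\beta_0,\ldots,\beta_c+1)$ and reorganize the $\bm\alpha$-block using the integral $\frac1{q!r!}\int(\log\frac{1-t_1}{1-t_2})^q(\log\frac{t_2}{t_1})^r\frac{dt_1dt_2}{(1-t_1)t_2}=\zeta(q+r+2)$ to perform the inner summations; the shift $q+r+2\to$ a single zeta value is exactly what introduces the $-\zeta(p+4)$.)

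Next I would establish the closed form $\left\{\tfrac{p(p+3)}2+\tfrac{p+3}{2^{p+2}}\right\}\zeta(p+4)$. The cleanest route is to write $\mathbb J(1)$ as a coefficient in a product of two generating functions. Splitting the integrand of $J_p$ as in Section \ref{sec.3}, on $D_1$ we have $\mathbb J(1)=\sum_{m+n=p}(-1)^m\,[\text{a double integral over }0<t_1<t_2<u_1]$; introducing a formal variable $x$, the exponential generating function $\sum_p \mathbb J(1)(-x)^p$ factors as the product of $\int_{0<t_1<t_2<1}(1-t_1)^{-x}(t_2/t_1)^{x}\cdot(\cdots)$ against the $Z_+^\star$-generating integral $\int_{0<u_1<u_2<1}(1-u_2)^x(u_1/u_2)^x\frac{du_1du_2}{(1-u_1)u_2}$ already evaluated in Section \ref{sec.2} as $\sum_{\ell\ge1}\frac{2(-1)^{\ell-1}}{(\ell+x)^2}$. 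The first factor, after the substitution producing a $\Gamma$-quotient, becomes $\sum_{k\ge1}\frac1{k+x}\frac{\Gamma(k)\Gamma(1+x)}{\Gamma(k+1+x)}$-type series which by Lemma \ref{lma.dd} (with $k_1=1$) is governed by the modified Bell polynomials $P_n(\zeta(2),\ldots)$; multiplying the two series and extracting the coefficient of $x^p$, only low-order poles/terms survive and collapse — by the pole analysis already invoked for the identity $\sum\frac1{(k+x)(k+2x)}\frac{\Gamma(k+x)^2}{\Gamma(k)\Gamma(k+2x)}=\sum\frac{2(-1)^{\ell-1}}{(\ell+x)^2}$ — to a rational multiple of $\zeta(p+4)$, and a bookkeeping of the two contributions $\tfrac{p(p+3)}2$ and $\tfrac{p+3}{2^{p+2}}$ drops out.

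Finally, equating the two computations yields the identity $\sum_{m+n=p}\sum_{c+d=n}\{\zeta^\star(c+2,\{1\}^m,d+2)-\zeta(p+4)\}=\left\{\tfrac{p(p+3)}2+\tfrac{p+3}{2^{p+2}}\right\}\zeta(p+4)$, which is the assertion. I expect the main obstacle to be the closed-form evaluation: carrying the two generating-function factors far enough to justify that their product's $x^p$-coefficient is purely a multiple of $\zeta(p+4)$ (rather than involving lower-weight products like $\zeta(2)\zeta(p+2)$) requires the same delicate meromorphic pole-cancellation argument used earlier in Section \ref{sec.2}, and tracking the powers of $2$ correctly through the $(1-t_1)^{-x}$ versus $(1-u_2)^{x}$ asymmetry and through the $k\mapsto k+1$ index shift is where sign and factor errors are most likely; the integral side, by contrast, is essentially a bookkeeping exercise once the correct splitting of $\log\frac{1-t_1}{1-u_2}$ on $D_1$ is chosen.
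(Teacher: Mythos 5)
Your overall plan (evaluate $\mathbb J(1)$ once as a zeta-star sum and once in closed form, then equate) matches the shape of the paper's argument, but both halves of your outline have concrete problems. The decisive one is the claimed factorization of the generating function for $\mathbb J(1)$: on $D_1$ the defining inequality $t_2<u_1$ couples the $(t_1,t_2)$ block to the $(u_1,u_2)$ block, so the generating function of $\mathbb J(1)$ does \emph{not} split as a product of an integral over $0<t_1<t_2<1$ with the $Z_+^\star$-generating integral; only the full $J_p$ over $E_2\times E_2$ factors that way. What the paper actually does is form the two-variable generating function of the general term $S(m,n)$ and integrate it out to the genuinely coupled double series
\[
G(x,y)=\sum_{m,n\ge0}S(m,n)\,x^{m}y^{n}
=\sum_{k,\ell\ge1}\frac{1}{k(k-y)(k+\ell)(k+\ell-y)}\,
\frac{\Gamma(k-x)\,\Gamma(k+\ell+1)}{\Gamma(k)\,\Gamma(k+\ell+1-x)},
\]
and then extract $S(m,n)$ by differentiation, using Lemma \ref{lma.dd} to convert the $m$-th $x$-derivative of the $\Gamma$-quotient into the non-strict nested sums $\sum_{k\le\ell_1\le\cdots\le\ell_m\le k+\ell}\frac{1}{\ell_1\cdots\ell_m}$. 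This is also exactly where your proposed derivation of the first equality goes astray: telescoping $\log\frac{1-t_1}{1-u_2}$ on $D_1$ and expanding multinomially yields ordinary (strict) multiple zeta values, as in the evaluation of $\mathbb J(1)$ in Section \ref{sec.3}, not $\zeta^{\star}(c+2,\{1\}^{m},d+2)$, and there is no routine ``boundary/normalization'' step converting one into the other. In the paper the star values arise precisely from the $\le$ signs supplied by Lemma \ref{lma.dd}, and the subtracted $\zeta(m+n+4)$ is the diagonal contribution $k=k+\ell$, which is present in $\zeta^{\star}(c+2,\{1\}^{m},d+2)$ but absent from the double sum because $\ell\ge1$.

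For the closed form, no pole-cancellation argument is used at all: once the first equality is established, the paper quotes $\sum_{a+b+m=p}\zeta^{\star}(a+2,\{1\}^{m},b+2)=\bigl(p^{2}+3p+1+\frac{p+3}{2^{p+2}}\bigr)\zeta(p+4)$ from \cite{CE2022} and subtracts the $\binom{p+2}{2}$ copies of $\zeta(p+4)$, which gives $\bigl(\frac{p(p+3)}{2}+\frac{p+3}{2^{p+2}}\bigr)\zeta(p+4)$. Without that external evaluation (or an equivalent one), your outline does not reach the stated value, and the route you sketch for producing it rests on the invalid product decomposition.
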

\begin{proof}
Let $S(m,n)$ be the general term inside the summation of $\mathbb J(1)$. Then

\begin{align*}
  \sum_{m=0}^{\infty} \sum_{n=0}^{\infty} S(m,n) x^{m} y^{n}
  &= \int_{0 < t_{1} < t_{2} < u_{1} < u_{2} < 1}
    \left( \frac{1-t_{1}}{1-u_{2}} \right)^{x}
    \left( \frac{t_{2}}{t_{1}} \right)^{y} \left( \frac{u_{2}}{u_{1}} \right)^{y} \frac{dt_{1} dt_{2}}{(1-t_{1})t_{2}}
    \frac{du_{1} du_{2}}{(1-u_{1})u_{2}} \\
  &\equiv G({x,y})
\end{align*}

\noindent is the generating function for the double sequence $\{S(m,n)\}$ and
\[
  S(m,n)
  = \frac{1}{m!n!} \left( \frac{\partial}{\partial x} \right)^{m}
    \left( \frac{\partial}{\partial y} \right)^{n} G(x,y) \Big|_{x=y=0}.
\]
Beginning with
\[
  \frac{1}{(1-t_{1})^{1-x}}
  = \sum_{k=1}^{\infty} \frac{\Gamma(k-x)}{\Gamma(k) \Gamma(1-x)} t_{1}^{k-1},
\]
and then integrating with respect to $t_{1}$, $t_{2}$ and $u_{1}$, we obtain that
\[
  G(x,y)
  = \sum_{k=1}^{\infty} \sum_{\ell=1}^{\infty} \frac{1}{(k-y)k(k+\ell-y)}
    \frac{\Gamma(k-x)}{\Gamma(k) \Gamma(1-x)} \int_{0}^{1} (1-u_{2})^{-x} u_{2}^{k+\ell-1} \, du_{2}.
\]
The value of integral is
\[
  \frac{\Gamma(1-x) \Gamma(k+\ell)}{\Gamma(k+\ell+1-x)},
  \quad \textrm{or} \quad
  \frac{1}{k+\ell} \frac{\Gamma(1-x) \Gamma(k+\ell+1)}{\Gamma(k+\ell+1-x)}.
\]
So
\[
  G(x,y)
  = \sum_{k=1}^{\infty} \sum_{\ell=1}^{\infty}
    \frac{1}{k(k-y)(k+\ell)(k+\ell-y)}
    \frac{\Gamma(k-x) \Gamma(k+\ell+1)}{\Gamma(k) \Gamma(k+\ell+1-x)}
\]
after differentiations with respect to $x$, $y$ for $m$, $n$ times, and we use Lemma \ref{lma.dd},
\[
  S(m,n)
  = \sum_{c+d=n} \left(\sum_{k=1}^{\infty} \sum_{\ell=1}^{\infty}
    \frac{1}{k^{c+2} (k+\ell)^{d+2}}
    \sum_{k \leq \ell_{1} \leq \cdots \leq \ell_{m} \leq k+\ell} 
    \frac{1}{\ell_{1} \ell_{2} \cdots \ell_{m}}\right).
\]
The general term of $S(m,n)$ is
\[
  \zeta^{\star}(c+2,\{1\}^{m},d+2) - \zeta(m+n+4)
\]
and hence
\[
  \mathbb J(1)
  = \sum_{m+n=p} \sum_{c+d=n}
    \{ \zeta^{\star}(c+2,\{1\}^{m},d+2) - \zeta(m+n+4) \}.
\]

We use a result in \cite[Theorem 2]{CE2022}:
\[
\sum_{a+b+m=p}\zeta^\star(a+2,\{1\}^m,b+2)
=\left(p^2+3p+1+\frac{p+3}{2^{p+2}}\right)\zeta(p+4).
\]
Therefore we have 
\[
  \mathbb J(1)
  = \left( \frac{p(p+3)}{2} + \frac{p+3}{2^{p+2}} \right) \zeta(p+4).
\]
\end{proof}

To transform
\[
 \mathbb J(2)
  = \sum_{c+d+m=p} (-1)^{m}\zeta(c+2,\{1\}^{m},d+2)
\]
into multiple zeta values related to $\mathbb J(1)$, we need the following reflection formula.

\begin{proposition}\cite[Proposition 4]{CE2022}\label{prop.reflection}
For an $r$-tuple $\bm\alpha=(\alpha_1,\alpha_2,\ldots,\alpha_r)$ of positive integers
with $\alpha_1\geq 2$, $\alpha_r\geq 2$, we have
\begin{equation}\label{eq.reflection}
\zeta(\alpha_1,\ldots,\alpha_r)+(-1)^r\zeta^\star(\alpha_r,\ldots,\alpha_1)
=\sum^{r-1}_{k=1}(-1)^{k+1}\zeta^\star(\alpha_k,\ldots,\alpha_1)
\zeta(\alpha_{k+1},\ldots,\alpha_r).
\end{equation}
\end{proposition}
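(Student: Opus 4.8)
The plan is to reduce the reflection formula to one symmetric identity and then prove that by a sign cancellation at the level of the underlying multiple series. First I would move the term $(-1)^{r}\zeta^{\star}(\alpha_{r},\ldots,\alpha_{1})$ to the left and set, for $0\le k\le r$,
\[
A_{k}=\zeta^{\star}(\alpha_{k},\ldots,\alpha_{1})\,\zeta(\alpha_{k+1},\ldots,\alpha_{r}),
\]
with the conventions that an empty string of arguments gives the value $1$, so that $A_{0}=\zeta(\alpha_{1},\ldots,\alpha_{r})$ and $A_{r}=\zeta^{\star}(\alpha_{r},\ldots,\alpha_{1})$. A one-line rearrangement of \eqref{eq.reflection} shows that it is equivalent to
\[
\sum_{k=0}^{r}(-1)^{k}A_{k}=0 .
\]

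Next I would write each $A_{k}$ as an explicit multiple series indexed by a single vector $\bm c=(c_{1},\ldots,c_{r})$ of positive integers. Reversing the order of the summation variables in the star factor, i.e. setting $c_{i}=m_{k+1-i}$ in $\zeta^{\star}(\alpha_{k},\ldots,\alpha_{1})=\sum_{1\le m_{1}\le\cdots\le m_{k}}m_{1}^{-\alpha_{k}}\cdots m_{k}^{-\alpha_{1}}$, turns it into $\sum_{c_{1}\ge\cdots\ge c_{k}\ge1}\prod_{i=1}^{k}c_{i}^{-\alpha_{i}}$, while $\zeta(\alpha_{k+1},\ldots,\alpha_{r})=\sum_{1\le c_{k+1}<\cdots<c_{r}}\prod_{i=k+1}^{r}c_{i}^{-\alpha_{i}}$. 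Hence
\[
A_{k}=\sum_{\substack{c_{1}\ge\cdots\ge c_{k},\ c_{k+1}<\cdots<c_{r}\\ c_{1},\ldots,c_{r}\ge1}}\ \prod_{i=1}^{r}c_{i}^{-\alpha_{i}},
\]
and the essential point is that the summand $\prod_{i}c_{i}^{-\alpha_{i}}$ does not depend on $k$; the index $k$ only governs where the weakly-decreasing prefix ends and the strictly-increasing suffix begins, with no relation imposed between $c_{k}$ and $c_{k+1}$. Because $\alpha_{1}\ge2$ and $\alpha_{r}\ge2$, every series here converges absolutely (each is a product of at most two convergent series of positive terms), so I may interchange the finite sum over $k$ with the sum over $\bm c$ and obtain
\[
\sum_{k=0}^{r}(-1)^{k}A_{k}=\sum_{\bm c}\Bigl(\prod_{i=1}^{r}c_{i}^{-\alpha_{i}}\Bigr)\sum_{k\in V(\bm c)}(-1)^{k},
\]
where $V(\bm c)=\{k:\ c_{1}\ge\cdots\ge c_{k}\text{ and }c_{k+1}<\cdots<c_{r}\}$.

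The heart of the argument is now purely combinatorial: for each fixed $\bm c$ I claim $\sum_{k\in V(\bm c)}(-1)^{k}=0$. Let $a=a(\bm c)$ be the smallest index $i$ with $c_{i}<c_{i+1}$, and $a=r$ if $\bm c$ is weakly decreasing; let $b=b(\bm c)$ be the largest index $i$ with $c_{i}\ge c_{i+1}$, and $b=0$ if $\bm c$ is strictly increasing. Then $c_{1}\ge\cdots\ge c_{k}$ holds exactly when $k\le a$ and $c_{k+1}<\cdots<c_{r}$ holds exactly when $k\ge b$, so $V(\bm c)=\{k:\ b\le k\le a\}$. A short argument shows that no position can simultaneously be the first "ascent" and the last "non-ascent", which, together with the boundary cases $a=r$, $b=0$, and $r=1$, forces $b\ge a-1$. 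Thus $V(\bm c)$ is either empty (when $b>a$) or the two-element set $\{a-1,a\}$ (when $b=a-1$), and in both cases the alternating sum vanishes. Summing over $\bm c$ gives $\sum_{k}(-1)^{k}A_{k}=0$, and unwinding the definitions of $A_{0}$ and $A_{r}$ recovers \eqref{eq.reflection}.

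The step I expect to require the most care is not any single computation but two bookkeeping points. First, one must pin down precisely which inequalities survive the reindexing $c_{i}=m_{k+1-i}$, so that the constraint sets for the star-part and the non-star-part glue along $c_{k},c_{k+1}$ with genuinely no relation between them. Second, one must check the inequality $b\ge a-1$ in the degenerate cases ($\bm c$ weakly decreasing, $\bm c$ strictly increasing, $r=1$), so that $V(\bm c)$ is reliably either empty or a pair of consecutive integers; once this is in place the cancellation is immediate.
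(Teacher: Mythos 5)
Your proposal is correct. Note that the paper itself offers no proof of this proposition --- it is imported verbatim as Proposition 4 of \cite{CE2022} --- so there is no in-text argument to compare against; what you have written is a complete, self-contained proof. The reduction to $\sum_{k=0}^{r}(-1)^{k}A_{k}=0$ is the right normalization, the reindexing $c_{i}=m_{k+1-i}$ correctly converts the star factor into the weakly decreasing prefix $c_{1}\ge\cdots\ge c_{k}$ with the matching exponents $c_{i}^{-\alpha_{i}}$, and absolute convergence (guaranteed by $\alpha_{1}\ge 2$ and $\alpha_{r}\ge 2$, which make each factor of $A_{k}$ a convergent series of positive terms) justifies the interchange with the finite sum over $k$. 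The combinatorial core also checks out: writing $a$ for the first ascent and $b$ for the last non-ascent, every position $i<a$ is a non-ascent, so $b\ge a-1$, while no position is both an ascent and a non-ascent, so $b\ne a$; hence $V(\bm c)$ is either empty or the consecutive pair $\{a-1,a\}$, and the alternating sum vanishes either way (the degenerate cases $r=1$, $\bm c$ weakly decreasing, $\bm c$ strictly increasing all land in the pair case $\{a-1,a\}$, as you note). This ``split-point'' inclusion--exclusion is the standard route to such reflection formulas, and it is almost certainly what underlies the cited result; one small sanity check worth recording is $r=2$, where the identity collapses to the stuffle decomposition $\zeta(\alpha_{1},\alpha_{2})+\zeta^{\star}(\alpha_{2},\alpha_{1})=\zeta(\alpha_{1})\zeta(\alpha_{2})$.
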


\begin{proposition}
Let $p$ be a nonnegative integer and 
\[
  \mathbb J(2) = \sum_{c+d+m=p} (-1)^{m} \zeta(c+2,\{1\}^{m},d+2).
\]
Then we have
\[
  \mathbb J(2)
  = J_{p} - \sum_{c+d+m=p} \zeta^{\star}(d+2,\{1\}^{m},c+2).
\]
\end{proposition}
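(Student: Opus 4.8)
The plan is to apply the reflection formula of Proposition~\ref{prop.reflection} to each summand of $\mathbb J(2)$ and then resum. First I would take $\bm\alpha=(c+2,\{1\}^{m},d+2)$, an $r$-tuple with $r=m+2$, $\alpha_1=c+2\geq 2$ and $\alpha_r=d+2\geq 2$, so that Proposition~\ref{prop.reflection} applies. Since $(-1)^{r}=(-1)^{m}$ and the reversal of $\bm\alpha$ is $(d+2,\{1\}^{m},c+2)$, Eq.~(\ref{eq.reflection}) becomes, after writing the cut point as $k=a+1$ and reindexing,
\[
\zeta(c+2,\{1\}^{m},d+2)+(-1)^{m}\zeta^{\star}(d+2,\{1\}^{m},c+2)
=\sum_{a=0}^{m}(-1)^{a}\,\zeta^{\star}(\{1\}^{a},c+2)\,\zeta(\{1\}^{m-a},d+2),
\]
with the conventions $\zeta^{\star}(\{1\}^{0},c+2)=\zeta(c+2)$ and $\zeta(\{1\}^{0},d+2)=\zeta(d+2)$. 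For $m=0$ this is just the stuffle relation $\zeta(c+2,d+2)+\zeta^{\star}(d+2,c+2)=\zeta(c+2)\zeta(d+2)$, which I would use to double-check the head and tail terms of the cut sum.

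Next I would multiply through by $(-1)^{m}$ and sum over all $c+d+m=p$. On the left the first block of terms is exactly $\mathbb J(2)$ and the second is $\sum_{c+d+m=p}\zeta^{\star}(d+2,\{1\}^{m},c+2)$, so it remains only to identify the right-hand side with $J_{p}$. Putting $b=m-a$, the sign collapses to $(-1)^{m}(-1)^{a}=(-1)^{b}$, and the right-hand side becomes
\[
\sum_{a+b+c+d=p}(-1)^{b}\,\zeta^{\star}(\{1\}^{a},c+2)\,\zeta(\{1\}^{b},d+2).
\]
Relabelling the indices so that the $\zeta$-factor carries the outer exponent, this is precisely $\sum_{m+n=p}(-1)^{m}Z_-(m)Z^{\star}_+(n)$ with $Z_-$ and $Z^{\star}_+$ as in Eq.~(\ref{eq.zz}), and that convolution was already identified with $J_{p}$ in Section~\ref{sec.3}. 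Rearranging gives $\mathbb J(2)=J_{p}-\sum_{c+d+m=p}\zeta^{\star}(d+2,\{1\}^{m},c+2)$, as claimed.

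The one genuinely delicate point is the bookkeeping in unrolling the reflection formula: one must check that when the middle block of $\bm\alpha$ consists of $m$ ones, the cut sum $\sum_{k=1}^{r-1}$ in Eq.~(\ref{eq.reflection}) produces exactly the terms $(-1)^{a}\zeta^{\star}(\{1\}^{a},c+2)\zeta(\{1\}^{m-a},d+2)$ for $a=0,\ldots,m$ --- in particular that the extreme cuts $k=1$ and $k=r-1$ contribute the one-argument factors $\zeta(c+2)$ and $\zeta(d+2)$, and that, since the sum stops at $r-1$, no cut separates the trailing entry $d+2$ from the rest. Once that is in hand, matching the resulting double convolution to $J_{p}$ is just a change of summation variables (together with $Z_-(2m+1)=0$ from Proposition~\ref{prop.22} if one wants the unsigned form of the convolution).
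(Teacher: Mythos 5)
Your argument is correct and follows essentially the same route as the paper: apply the reflection formula of Proposition~\ref{prop.reflection} to $\zeta(c+2,\{1\}^{m},d+2)$, multiply by $(-1)^{m}$, sum over $c+d+m=p$, and recognize the resulting double convolution $\sum_{a+b+c+d=p}(-1)^{b}\zeta^{\star}(\{1\}^{a},c+2)\zeta(\{1\}^{b},d+2)$ as $J_{p}=\sum_{m+n=p}(-1)^{m}Z_-(m)Z^{\star}_+(n)$. Your extra checks on the cut-sum bookkeeping and the $m=0$ case are consistent with the paper's computation.
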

\begin{proof}
By the reflection formula (see Proposition \ref{prop.reflection}), we have
\[
  \zeta(c+2,\{1\}^{m},d+2)
  = \sum_{a+b=m} (-1)^{a} \zeta^{\star}(\{1\}^{a},c+2) \zeta(\{1\}^{b},d+2) 
    + (-1)^{m+1} \zeta^{\star}(d+2,\{1\}^{m},c+2),
\]
so that

\begin{align*}
  \mathbb J(2)
  &= \sum_{m+n=p}\left( (-1)^{m} \sum_{a+b=m} (-1)^{a} \sum_{c+d=n} 
    \zeta^{\star}(\{1\}^{a},c+2) \zeta(\{1\}^{b},d+2) 
  - \sum_{c+d=n} \zeta^{\star}(d+2,\{1\}^{m},c+2) \right)\\
  &= \sum_{a+b+c+d=p} (-1)^{b} \zeta^{\star}(\{1\}^{a},c+2) 
    \zeta(\{1\}^{b},d+2) - \sum_{c+d+m=n} \zeta^{\star}(d+2,\{1\}^{m},c+2).
\end{align*}

The first sum can be rewritten as
\[
  \sum_{m+n=p} \sum_{b+d=m} (-1)^{b} \zeta(\{1\}^{b},d+2) \sum_{a+c=n} 
    \zeta^{\star}(\{1\}^{a},c+2),
\]
which is precisely equal to $J_{p}$ expressed as
\[
  \sum_{m+n=p} \frac{1}{m!n!} \int_{E_{2}} 
    \left( -\log \frac{1}{1-t_{1}} + \log \frac{t_{2}}{t_{1}} \right)^{m} \frac{dt_{1} dt_{2}}{(1-t_{1})t_{2}} \int_{E_{2}} 
    \left( \log \frac{1}{1-u_{2}} + \log \frac{u_{2}}{u_{1}} \right)^{n} \frac{du_{1} du_{2}}{(1-u_{1})u_{2}}.
\]
\end{proof}

According to previous propositions concerning $\mathbb J(1)$ and $\mathbb J(2)$, 
we have the following

\begin{corollary}
Notation as above, then we have
\[
  \mathbb J(1) + \mathbb J(2) = J_{p} - \frac{(p+1)(p+2)}{2} \zeta(p+4).
\]
\end{corollary}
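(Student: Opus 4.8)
The plan is to combine the two propositions on $\mathbb J(1)$ and $\mathbb J(2)$ that have just been established, together with a symmetry observation on the summation index, so that the multiple zeta-star terms cancel outright. I would not use the closed form of $\mathbb J(1)$ but rather its intermediate expression $\mathbb J(1) = \sum_{m+n=p}\sum_{c+d=n}\{\zeta^{\star}(c+2,\{1\}^{m},d+2) - \zeta(p+4)\}$ coming from the generating-function computation. Splitting the summand into its two pieces, the first piece assembles into $\sum_{c+d+m=p}\zeta^{\star}(c+2,\{1\}^{m},d+2)$, while the second contributes $\zeta(p+4)$ multiplied by the number of nonnegative triples $(m,c,d)$ with $c+d+m=p$. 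That count is $\sum_{n=0}^{p}(n+1) = \frac{(p+1)(p+2)}{2}$, so $\mathbb J(1) = \sum_{c+d+m=p}\zeta^{\star}(c+2,\{1\}^{m},d+2) - \frac{(p+1)(p+2)}{2}\zeta(p+4)$.

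Next I would add this to the expression $\mathbb J(2) = J_{p} - \sum_{c+d+m=p}\zeta^{\star}(d+2,\{1\}^{m},c+2)$ supplied by the preceding proposition. The two multiple sums of zeta-star values cancel exactly: interchanging the summation labels $c$ and $d$ in one of them—legitimate precisely because the sum ranges over all compositions $c+d+m=p$—turns $\zeta^{\star}(d+2,\{1\}^{m},c+2)$ into $\zeta^{\star}(c+2,\{1\}^{m},d+2)$. What remains after the cancellation is exactly $J_{p} - \frac{(p+1)(p+2)}{2}\zeta(p+4)$, which is the assertion.

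There is no genuine obstacle here: the substantive work was already carried out in proving the two propositions (in particular the generating-function evaluation of $\mathbb J(1)$ and the reflection-formula manipulation for $\mathbb J(2)$), and this corollary is pure bookkeeping. The only points that need care are the counting of the $\zeta(p+4)$-terms in $\mathbb J(1)$ and the observation that the cancellation of the $\zeta^{\star}$-sums is term-by-term exact. As an alternative route, one may instead insert the closed forms $\mathbb J(1) = \left(\frac{p(p+3)}{2} + \frac{p+3}{2^{p+2}}\right)\zeta(p+4)$ and, from \cite{CE2022}, $\sum_{c+d+m=p}\zeta^{\star}(c+2,\{1\}^{m},d+2) = \left(p^{2}+3p+1+\frac{p+3}{2^{p+2}}\right)\zeta(p+4)$ into $\mathbb J(1)+\mathbb J(2) = J_{p} + \mathbb J(1) - \sum_{c+d+m=p}\zeta^{\star}(c+2,\{1\}^{m},d+2)$ and simplify the coefficient of $\zeta(p+4)$; the $2^{-p-2}$ contributions cancel and one is left with $\frac{p(p+3)}{2} - (p^{2}+3p+1) = -\frac{(p+1)(p+2)}{2}$, giving the same conclusion.
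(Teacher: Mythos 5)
Your proof is correct and follows the same route the paper intends: the corollary is stated there as an immediate consequence of the two preceding propositions, and your bookkeeping (counting the $\frac{(p+1)(p+2)}{2}$ copies of $\zeta(p+4)$ in $\mathbb J(1)$ and cancelling the two $\zeta^{\star}$-sums by the relabeling $c\leftrightarrow d$) is exactly the omitted verification. Both your main argument and your alternative via the closed forms check out numerically, so nothing further is needed.
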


Now we combine Eq.\,(\ref{eq.j12w}) and we conclude our main theorem.
%
\section{An Application of $\mathbb J(3)+\mathbb J(4)+\mathbb J(5)+\mathbb J(6)$}\label{sec.6}
%
Let us begin with another integral 
\[
\mathbb I(p)
=\frac1{p!}\int_{E_2\times E_2}\left(\log\frac{1-t_1}{1-t_2}
+\log\frac{t_2}{t_1}+\log\frac{u_2}{u_1}\right)^p
\frac{dt_1dt_2}{(1-t_1)t_2}\frac{du_1du_2}{(1-u_1)u_2},
\]
where $p$ is a nonnegative integer.
Since 
\[
\frac1{m!}\int_{0<t_1<t_2<1}\left(\log\frac{1-t_1}{1-t_2}+\log\frac{t_2}{t_1}\right)^m
\frac{dt_1}{1-t_1}\frac{dt_2}{t_2}
=\zeta^\star(\{1\}^m,2)=(m+1)\zeta(m+2),
\]
The integral $\mathbb I(p)$ can be written as 
\[
\sum_{m+n=p}(m+1)\zeta(m+2)\zeta(n+2)\quad\mbox{or}\quad
\frac{p+2}2\sum_{m+n=p}\zeta(m+2)\zeta(n+2).
\]
On the other hand, we decomposed the integral $\mathbb I(p)$ into six parts
in a similar way to $\mathbb J_p$:
\[
\mathbb I(p)=\sum^6_{j=1}\mathbb I(p;j),
\]
where
\[
\mathbb I(p;j)=\frac1{p!}\int_{D_j}
\left(\log\frac{1-t_1}{1-t_2}
+\log\frac{t_2}{t_1}+\log\frac{u_2}{u_1}\right)^p
\frac{dt_1dt_2}{(1-t_1)t_2}\frac{du_1du_2}{(1-u_1)u_2},
\]
The first integral $\mathbb I(p;1)$ is on $D_1: 0<t_1<t_2<u_1<u_2<1$, and we have
\[
\mathbb I(p;1)=\sum_{a+b+c=p}
\sum_{|\bm \alpha|=a+b+1}\zeta(\alpha_0,\ldots,\alpha_{a-1},\alpha_a+1,c+2).
\]
The integral $\mathbb I(p;2)$ is 
\[
\mathbb I(p;2)=\sum_{a+b+c=p}
\sum_{|\bm \beta|=b+c+1}\zeta(a+2,\beta_0,\ldots,\beta_{b-1},\beta_b+1).
\]
The next four parts have corresponding equality with 
the parts $\mathbb J(i)$, for $3\leq i\leq 6$, as
\[
\mathbb I(p;3)=\mathbb J(4),\quad
\mathbb J(p;4)=\mathbb J(3),\quad
\mathbb J(p;5)=\mathbb J(6),\quad
\mathbb J(p;6)=\mathbb J(5).
\]
Therefore, we get
\[
\sum_{j=3}^6\mathbb I(p;j)=\sum^6_{j=3}\mathbb J(j)=\frac{(p+1)(p+2)}2\zeta(p+4).
\]
This gives us the following theorem.

\begin{theorem}
Let $p$ be an nonnegative integer. Then

\begin{align*}
&\sum_{a+b+c=p}
\left( \sum_{|\bm \alpha|=a+b+1}\zeta(\alpha_0,\ldots,\alpha_{a-1},\alpha_a+1,c+2)
+\sum_{|\bm \beta|=b+c+1}\zeta(a+2,\beta_0,\ldots,\beta_{b-1},\beta_b+1)\right)\\
&\qquad\qquad=\frac{p+2}2\sum_{m+n=p}\zeta(m+2)\zeta(n+2)-\frac{(p+1)(p+2)}2\zeta(p+4).
\end{align*}
\end{theorem}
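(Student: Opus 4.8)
The plan is to exploit the decomposition $\mathbb{I}(p) = \sum_{j=1}^{6} \mathbb{I}(p;j)$ and evaluate the two sides of this identity in the two different ways indicated in the excerpt. On one side, by the integral representation for $\zeta^\star(\{1\}^m,2)$ recalled just before the statement together with the Fubini splitting of the factor $\bigl(\log\frac{1-t_1}{1-t_2}+\log\frac{t_2}{t_1}+\log\frac{u_2}{u_1}\bigr)^p$ into an $(t_1,t_2)$-part and a $(u_1,u_2)$-part over the full domain $E_2\times E_2$, one gets $\mathbb{I}(p) = \sum_{m+n=p}(m+1)\zeta(m+2)\zeta(n+2) = \frac{p+2}{2}\sum_{m+n=p}\zeta(m+2)\zeta(n+2)$, the last equality by symmetrizing $m\leftrightarrow n$. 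On the other side, I would integrate over the six simplices $D_1,\ldots,D_6$ separately.

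The next step is to identify each $\mathbb{I}(p;j)$ with a sum of multiple zeta values. For $j=1$ (the simplex $0<t_1<t_2<u_1<u_2<1$), on $D_1$ one rewrites $\log\frac{1-t_1}{1-t_2}$ as itself (it is already adapted to the simplex) and expands; the iterated-integral dictionary from Section~\ref{sec.2} then yields $\mathbb{I}(p;1)=\sum_{a+b+c=p}\sum_{|\bm\alpha|=a+b+1}\zeta(\alpha_0,\ldots,\alpha_{a-1},\alpha_a+1,c+2)$, exactly as claimed. Symmetrically, on $D_2$ ($0<u_1<u_2<t_1<t_2<1$) the $u$-block now sits below the $t$-block, and the same bookkeeping gives $\mathbb{I}(p;2)=\sum_{a+b+c=p}\sum_{|\bm\beta|=b+c+1}\zeta(a+2,\beta_0,\ldots,\beta_{b-1},\beta_b+1)$. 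For $j=3,4,5,6$, the point is that the integrand of $\mathbb{I}(p)$ differs from that of $J_p$ only in the first logarithmic factor, $\log\frac{1-t_1}{1-t_2}$ versus $\log\frac{1-t_1}{1-u_2}$; but on each of $D_3,\ldots,D_6$ the telescoping of $\log\frac{1-t_1}{1-u_2}$ into consecutive differences, when reorganized, reproduces the same integrand data as $\log\frac{1-t_1}{1-t_2}$ does on the partner simplex obtained by swapping the roles of the two "inner" variables. Concretely this forces $\mathbb{I}(p;3)=\mathbb{J}(4)$, $\mathbb{I}(p;4)=\mathbb{J}(3)$, $\mathbb{I}(p;5)=\mathbb{J}(6)$, $\mathbb{I}(p;6)=\mathbb{J}(5)$, so $\sum_{j=3}^6\mathbb{I}(p;j)=\sum_{j=3}^6\mathbb{J}(j)$, which equals $\frac{(p+1)(p+2)}{2}\zeta(p+4)$ by Eq.~(\ref{eq.j36}).

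Finally I would assemble the pieces: from $\mathbb{I}(p)=\sum_{j=1}^6\mathbb{I}(p;j)$ we get
\[
\frac{p+2}{2}\sum_{m+n=p}\zeta(m+2)\zeta(n+2) = \mathbb{I}(p;1)+\mathbb{I}(p;2)+\frac{(p+1)(p+2)}{2}\zeta(p+4),
\]
and substituting the two explicit expressions for $\mathbb{I}(p;1)$ and $\mathbb{I}(p;2)$ and rearranging yields the asserted formula. I expect the main obstacle to be the careful verification of the four simplex identifications $\mathbb{I}(p;j)=\mathbb{J}(\pi(j))$ in the middle step: one must check that, after the telescoping substitution on each simplex, the exponent bookkeeping (including the powers of $2$ coming from repeated $\log$-factors such as $2\log\frac{t_2}{u_1}$ in the $\mathbb{J}(3)$ computation) matches term by term, and that the pairing of simplices is the one dictated by swapping the two inner integration variables. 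Everything else is the same routine expansion-and-dictionary argument already used for $\mathbb{J}(1),\ldots,\mathbb{J}(6)$ in Sections~\ref{sec.3} and~\ref{sec.4}.
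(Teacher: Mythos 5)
Your proposal is correct and follows essentially the same route as the paper's own proof: evaluating $\mathbb I(p)$ once via the $\zeta^\star(\{1\}^m,2)=(m+1)\zeta(m+2)$ factorization over $E_2\times E_2$ and once via the six-simplex decomposition, identifying $\mathbb I(p;1)$, $\mathbb I(p;2)$ explicitly and matching $\mathbb I(p;j)$ for $3\le j\le 6$ with the permuted $\mathbb J(j)$ (via the variable swap $t_2\leftrightarrow u_2$), then invoking Eq.~(\ref{eq.j36}). The paper additionally sketches a second, independent proof using Ohno's duality theorem, but that is presented only as an alternative.
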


Here we also give another proof using a duality theorem due to Ohno \cite{Ohno1999}.
Given any nonnegative integer $a$, we know that the dual of $\zeta(\{1\}^a,2,2)$
is $\zeta(2,a+2)$ and the dual of $\zeta(2,\{1\}^a,2)$ is $\zeta(a+2,2)$, respectively.
Then for a nonnegative integer $m$, we have 

\begin{align*}
\sum_{|\bm c|=m}\zeta(c_1+1,\ldots,c_a+1,c_{a+1}+2,c_{a+2}+2)
&=\sum_{|\bm d|=m}\zeta(d_1+2,d_2+a+2),\\
\sum_{|\bm c|=m}\zeta(c_1+2,c_2+1,\ldots,c_{a+1}+1,c_{a+2}+2)
&=\sum_{|\bm d|=m}\zeta(d_1+a+2,d_2+2).
\end{align*}
We substitue the above formulas into $\mathbb I(p;1)+\mathbb I(p;2)$,
then the desired result will be obtained.
\section{Another Expression of $\mathbb J(3)$}\label{sec.7}
The sum of multiple zeta values
\[
  \mathbb J(3)
  = \sum_{m+n=p} \sum_{|\boldsymbol{\alpha}|=p+3} 
    \zeta(\alpha_{0},\alpha_{1},\ldots,\alpha_{m},\alpha_{m+1}+1) \sum_{a+b+c=m} W(a,b,c)
\]
came from a joint of three sums
\[
  \sum_{m+n=p} \sum_{a+b+c=m} \sum_{u+v+w=n} 2^{v} 
    \sum_{\substack{|\boldsymbol{\alpha}|=a+u+1 \\ |\boldsymbol{\beta}|=b+v+1 \\ |\boldsymbol{\gamma}|=c+w+1}} 
    \zeta(\alpha_{0}, \alpha_{1}, \ldots, \alpha_{a}, \beta_{0}, \beta_{1}, \ldots, \beta_{b}+\gamma_{0}, \ldots, \gamma_{c}+1)
\]
and it has the integral representation
\[
  \frac{1}{p!} \int_{D_{3}} \left( \log \frac{1-t_{1}}{1-u_{2}} 
    + \log \frac{t_{2}}{t_{1}} + \log \frac{u_{2}}{u_{1}} \right)^{p} \frac{dt_{1} dt_{2}}{(1-t_{1})t_{2}} \frac{du_{1} du_{2}}{(1-u_{1})u_{2}},
\]
which is equal to
\[
  \sum_{m+n=p} \frac{1}{m!n!} \int_{D_{3}} 
    \left( \log \frac{1-t_{1}}{1-u_{2}} \right)^{m} 
    \left( \log \frac{t_{2}}{t_{1}} + \log \frac{u_{2}}{t_{1}} \right)^{n} \frac{dt_{1} dt_{2}}{(1-t_{1})t_{2}} \frac{du_{1} du_{2}}{(1-u_{1})u_{2}}.
\]
Let $T(m,n)$ be the general term in the above sum. Here we are going to find the value of $T(m,n)$ through its generating function.

\begin{proposition}
For integers $m,n \geq 0$, let
\[
  T(m,n)
  = \frac{1}{m!n!} \int_{D_{3}} \left( \log \frac{1-t_{1}}{1-u_{2}} \right)^{m}
    \left( \log \frac{t_{2}}{t_{1}} + \log \frac{u_{2}}{u_{1}} \right)^{n} \frac{dt_{1} dt_{2}}{(1-t_{1})t_{2}} \frac{du_{1} du_{2}}{(1-u_{1})u_{2}}.
\]
Then
\[
  T(m,n)
  = \sum_{|\boldsymbol{\alpha}|=n+3} 
    \{ \zeta^{\star}(\alpha_{1},\{1\}^{m},\alpha_{2}+1) - \zeta(m+n+4) \} (2^{\alpha_{2}-1}-1).
\]
\end{proposition}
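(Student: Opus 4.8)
The plan is to determine $T(m,n)$ through its two–variable generating function, exactly in the spirit of the treatment of $\mathbb J(1)$ in Section \ref{sec.5}. Put
\[
  G(x,y) = \sum_{m,n\geq 0} T(m,n)\,x^{m}y^{n} = \int_{D_{3}} \left(\frac{1-t_{1}}{1-u_{2}}\right)^{x}\left(\frac{t_{2}}{t_{1}}\right)^{y}\left(\frac{u_{2}}{u_{1}}\right)^{y}\,\frac{dt_{1}dt_{2}}{(1-t_{1})t_{2}}\,\frac{du_{1}du_{2}}{(1-u_{1})u_{2}},
\]
which is analytic for $(x,y)$ near $(0,0)$, so that $T(m,n)$ is the coefficient of $x^{m}y^{n}$ in its Taylor expansion.

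First I would evaluate $G(x,y)$ by integrating the four variables out in the order forced by $D_{3}\colon 0<t_{1}<u_{1}<t_{2}<u_{2}<1$. Using $(1-t_{1})^{x-1}=\sum_{k\geq1}\frac{\Gamma(k-x)}{\Gamma(k)\Gamma(1-x)}t_{1}^{k-1}$ and $(1-u_{1})^{-1}=\sum_{j\geq0}u_{1}^{j}$, the $t_{1}$–integral over $(0,u_{1})$ produces $\frac{u_{1}^{k-y}}{k-y}$; the crucial point is that this surviving power $u_{1}^{k-y}$ meets the factor $u_{1}^{-y}$ coming from $(u_{2}/u_{1})^{y}$, so the subsequent $u_{1}$–integral yields a denominator of the form $L-2y$ with $L=k+j+1$ — this is exactly where the factor $2$ will come from. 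Carrying out the remaining $t_{2}$– and $u_{2}$–integrals (the last a Beta integral $\int_{0}^{1}u_{2}^{L-1}(1-u_{2})^{-x}\,du_{2}$) and simplifying $\frac{\Gamma(L)}{\Gamma(L+1-x)}=\frac{1}{L}\frac{\Gamma(L+1)}{\Gamma(L+1-x)}$, the computation gives
\[
  G(x,y) = \sum_{k\geq1}\sum_{L>k} \frac{1}{L(k-y)(L-y)(L-2y)}\,\frac{\Gamma(k-x)\Gamma(L+1)}{\Gamma(k)\Gamma(L+1-x)}.
\]

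Next I would extract coefficients. By Lemma \ref{lma.dd} with $k_{1}=k$, $k_{2}=L$ (note $k<L$), the coefficient of $x^{m}$ in $\frac{\Gamma(k-x)\Gamma(L+1)}{\Gamma(k)\Gamma(L+1-x)}$ is $\sum_{k\leq \ell_{1}\leq\cdots\leq \ell_{m}\leq L}\frac{1}{\ell_{1}\cdots \ell_{m}}$, while expanding the geometric series $\frac{1}{k-y}=\sum_{a}\frac{y^{a}}{k^{a+1}}$, $\frac{1}{L-y}=\sum_{d}\frac{y^{d}}{L^{d+1}}$, $\frac{1}{L-2y}=\sum_{c}\frac{2^{c}y^{c}}{L^{c+1}}$ shows the coefficient of $y^{n}$ in $\frac{1}{L(k-y)(L-y)(L-2y)}$ is $\sum_{a+c+d=n}\frac{2^{c}}{k^{a+1}L^{c+d+3}}$. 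Hence
\begin{align*}
  T(m,n) &= \sum_{a+c+d=n}2^{c}\sum_{k\geq1}\sum_{L>k}\frac{1}{k^{a+1}L^{c+d+3}}\sum_{k\leq \ell_{1}\leq\cdots\leq \ell_{m}\leq L}\frac{1}{\ell_{1}\cdots \ell_{m}} \\
  &= \sum_{a+c+d=n}2^{c}\bigl(\zeta^{\star}(a+1,\{1\}^{m},c+d+3)-\zeta(m+n+4)\bigr),
\end{align*}
where the term $-\zeta(m+n+4)$ subtracts the diagonal contribution $k=L$, which is counted by $\zeta^{\star}$ but excluded by the restriction $L>k$ (on that diagonal all $\ell_{i}=k$, and the total exponent is $(a+1)+m+(c+d+3)=m+n+4$). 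It then remains to reindex by $\alpha_{1}=a+1$, $\alpha_{2}=c+d+2$, so that $\alpha_{1}+\alpha_{2}=n+3$; for fixed $\alpha_{1},\alpha_{2}$ the inner sum over $c=0,1,\ldots,\alpha_{2}-2$ is $\sum_{c=0}^{\alpha_{2}-2}2^{c}=2^{\alpha_{2}-1}-1$, which vanishes when $\alpha_{2}=1$, so one may run the outer sum over all $|\bm\alpha|=n+3$. This is the asserted identity.

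The main obstacle is the explicit four–fold iterated integration together with the bookkeeping that correctly locates the factor $2$: one must notice that, after integrating out $t_{1}$, the exponent of $u_{1}$ is $k-2y$ rather than $k-y$, while no such doubling occurs for $u_{2}$. A secondary point needing care is the diagonal correction $-\zeta(m+n+4)$ and the verification that the $\alpha_{2}=1$ summand is zero, along with the (routine) check that $G(x,y)$ is analytic near the origin so that the interchanges of summation and integration are justified.
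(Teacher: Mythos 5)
Your proposal is correct and follows essentially the same route as the paper: the same generating function $G(x,y)$, the same closed form $\sum_{k\geq 1}\sum_{\ell\geq 1}\frac{1}{(k-y)(k+\ell-2y)(k+\ell-y)(k+\ell)}\frac{\Gamma(k-x)\Gamma(k+\ell+1)}{\Gamma(k)\Gamma(k+\ell+1-x)}$, the same coefficient extraction via Lemma \ref{lma.dd} and geometric-series expansion in $y$, and the same reindexing to $|\bm\alpha|=n+3$ with the factor $2^{\alpha_2-1}-1$. You merely supply more detail than the paper does (the explicit iterated integration locating the $L-2y$ factor, the diagonal correction $-\zeta(m+n+4)$, and the vanishing of the $\alpha_2=1$ terms), all of which checks out.
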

\begin{proof}
The generating function for the double sequence $T(m,n)$ is

\begin{align*}
  \sum_{m=0}^{\infty} \sum_{n=0}^{\infty} T(m,n) x^{m} y^{n}
  &\equiv G(x,y) \\
  &= \int_{D_{3}} \left( \frac{1-t_{1}}{1-u_{2}} \right)^{x} 
    \left( \frac{t_{2}}{t_{1}} \right)^{y} 
    \left( \frac{u_{2}}{u_{1}} \right)^{y} \frac{dt_{1} dt_{2}}{(1-t_{1})t_{2}} \frac{du_{1} du_{2}}{(1-u_{1})u_{2}}.
\end{align*}
Like the case of $\mathbb J(1)$, $G(x,y)$ can be evaluated as
\[
  \sum_{k=1}^{\infty} \sum_{\ell=1}^{\infty} 
    \frac{1}{(k-y)(k+\ell-2y)(k+\ell-y)(k+\ell)} \frac{\Gamma(k-x)}{\Gamma(k)} \frac{\Gamma(k+\ell+1)}{\Gamma(k+\ell+1-x)}
\]
Hence

\begin{align*}
  T(m,n)
  &= \frac{1}{m!n!} \left( \frac{\partial}{\partial x} \right)^{m} 
    \left( \frac{\partial}{\partial y} \right)^{n} G(x,y) \Big|_{x=y=0} \\
  &= \sum_{a+b+c=n} \sum_{k=1}^{\infty} \sum_{\ell=1}^{\infty} 
    \frac{2^{b}}{k^{a+1} (k+\ell)^{b+c+3}} 
    \sum_{k \leq k_{1} \leq k_{2} \leq \cdots \leq k_{m} \leq k+\ell} \frac{1}{k_{1} k_{2} \cdots k_{m}}.
\end{align*}
It is equal to
\[
  \sum_{|\boldsymbol{\alpha}|=n+3}
    \{ \zeta^{\star}(\alpha_{1},\{1\}^{m},\alpha_{2}+1) - \zeta(m+n+4) \} (2^{\alpha_{2}-1}-1),
\]
as asserted.
\end{proof}

\begin{corollary}
For any nonnegaitve integer $p$, we have
\[
  \mathbb J(3)
  = \sum_{m+n=p} \sum_{|\boldsymbol{\alpha}|=n+3}
    \Bigl\{ \zeta^{\star}(\alpha_{1},\{1\}^{m},\alpha_{2}+1) - \zeta(p+4) \Bigr\} 
    (2^{\alpha_{2}-1}-1).
\]
\end{corollary}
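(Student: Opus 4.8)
The plan is to sum the formula for $T(m,n)$ established in the preceding proposition over all pairs $(m,n)$ with $m+n=p$, and then to recognize the resulting double sum as $\mathbb{J}(3)$. First I would recall that, by definition, $\mathbb{J}(3) = \sum_{m+n=p} T(m,n)$, since $T(m,n)$ is exactly the general term appearing in the sum expansion of $\mathbb{J}(3)$ given at the start of this section (the integrand of $\mathbb{J}(3)$ was expanded by the binomial theorem into $\sum_{m+n=p}\frac{1}{m!n!}(\log\frac{1-t_1}{1-u_2})^m(\log\frac{t_2}{t_1}+\log\frac{u_2}{u_1})^n$ over $D_3$, and $T(m,n)$ is the $(m,n)$-th piece of precisely that expansion).

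Next I would substitute the evaluation
\[
  T(m,n) = \sum_{|\boldsymbol{\alpha}|=n+3}
    \bigl\{ \zeta^{\star}(\alpha_{1},\{1\}^{m},\alpha_{2}+1) - \zeta(m+n+4) \bigr\}(2^{\alpha_{2}-1}-1)
\]
from the proposition directly into $\mathbb{J}(3) = \sum_{m+n=p} T(m,n)$. Since $m+n=p$ throughout the outer sum, the term $\zeta(m+n+4)$ simplifies to the constant $\zeta(p+4)$, which can be pulled past the inner sum over $\boldsymbol{\alpha}$ but is harmless to leave inside the braces. This yields
\[
  \mathbb{J}(3) = \sum_{m+n=p} \sum_{|\boldsymbol{\alpha}|=n+3}
    \bigl\{ \zeta^{\star}(\alpha_{1},\{1\}^{m},\alpha_{2}+1) - \zeta(p+4) \bigr\}(2^{\alpha_{2}-1}-1),
\]
which is the asserted identity.

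In truth there is no serious obstacle here: the corollary is an immediate consequence of summing the proposition, and the only thing to verify is the bookkeeping that the outer index set $\{(m,n): m+n=p\}$ matches the decomposition of $\mathbb{J}(3)$ and that $\zeta(m+n+4)=\zeta(p+4)$ under the constraint. If one wanted to be scrupulous, the single point worth a sentence is justifying the interchange of the (finite) outer sum with the infinite series implicit in $\zeta^{\star}$ and $\zeta$ — but since the outer sum over $m+n=p$ is finite and each $T(m,n)$ is an absolutely convergent object (it is a finite integral over $D_3$ of a bounded-by-integrable integrand, equivalently a finite combination of convergent multiple zeta and zeta-star values), this is automatic. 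So the proof is essentially a one-line substitution, and I would write it as such.
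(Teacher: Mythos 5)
Your proposal is correct and matches the paper's (implicit) argument exactly: the corollary is stated without separate proof precisely because it is the sum over $m+n=p$ of the preceding proposition's formula for $T(m,n)$, with $\zeta(m+n+4)=\zeta(p+4)$ under that constraint. Nothing further is needed.
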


\begin{remark}
We have \cite[Theorem 1]{CE2022} a duality theorem:
\[
  \sum_{|\boldsymbol{\alpha}|=n+3} 
    \zeta^{\star}(\alpha_{1},\{1\}^{m},\alpha_{2}+1)
  = (m+n+3) \zeta(m+n+4).
\]
However, it is still unknown for the weighted sum
\[
  \sum_{|\boldsymbol{\alpha}|=n+3} 
    \zeta^{\star}(\alpha_{1},\{1\}^{m},\alpha_{2}+1) 2^{\alpha_{2}}.
\]
However, our weighted sum formula did provide an approximate evaluation of weighted sum of zeta-star values of such kind.
\end{remark}

In the next section, we will try give some weighted alternating Euler sums.
%
\section{Weighted Alternating Euler Sum Formulas}\label{sec.8}
%
The weighted alternating Euler sum formula
\[
\sum_{m+n=p}2^n\zeta(m+1,\ov{n+1})
=\frac{p+1}2\zeta(p+2)+\zeta(p+1,\ov{1})+\zeta(\ov{p+2})
\]
was obtained in \cite[Eq.\,(5.3)]{CE2021}. Now we produce a more general
formula which it covers the above.
\begin{theorem}
For any nonnegative integers $p, q$ and a real number $\lambda$, we have

\begin{align}\label{eq.81}
&\sum_{m+n=p}\binom{n}{q}\lambda^{n-q}\zeta(\ov{m+1})\zeta(\ov{n+1}) \\
&=\sum_{m+n=p}\binom{n}{q}(\lambda+1)^{n-q}\zeta(m+1,\ov{n+1})
+\sum_{\substack{m+n=p\\a+b=q}}\binom{m}{a}\binom{n}{b}
\lambda^{m-a}(\lambda+1)^{n-b}\zeta(m+1,\ov{n+1}).\nonumber
\end{align}
\end{theorem}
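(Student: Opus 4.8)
The plan is to reduce \eqref{eq.81} to an identity between one-variable generating functions in a variable $x$ recording the total weight $p=m+n$, and then to collapse both sides to the same series by two elementary reindexings. Put $A(x)=\sum_{j\ge1}\dfrac{(-1)^j}{j-x}$, so that $A(x)=\sum_{m\ge0}\zeta(\ov{m+1})\,x^m$ and $\dfrac1{q!}A^{(q)}(x)=\sum_{j\ge1}\dfrac{(-1)^j}{(j-x)^{q+1}}$. Using the elementary expansion $\sum_{n\ge q}\binom nq c^{\,n-q}z^n=\dfrac{z^q}{(1-cz)^{q+1}}$ one gets $\sum_{n\ge0}\binom nq\lambda^{n-q}\zeta(\ov{n+1})\,x^n=x^q\sum_{k\ge1}\dfrac{(-1)^k}{(k-\lambda x)^{q+1}}$, and hence the generating function $\sum_{p\ge0}x^p\sum_{m+n=p}\binom nq\lambda^{n-q}\zeta(\ov{m+1})\zeta(\ov{n+1})$ equals
\[
A(x)\cdot x^q\sum_{k\ge1}\frac{(-1)^k}{(k-\lambda x)^{q+1}}
=x^q\sum_{i,j\ge1}\frac{(-1)^{i+j}}{(j-x)(i-\lambda x)^{q+1}}=:x^qL(x).
\]

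For the right side I would use the two-variable generating function $B(x,y)=\sum_{m,n\ge0}\zeta(m+1,\ov{n+1})\,x^my^n=\sum_{1\le j<k}\dfrac{(-1)^k}{(j-x)(k-y)}$. Replacing in $B$ the factor $\dfrac1{k-y}=\sum_{n\ge0}\dfrac{y^n}{k^{n+1}}$ by $\sum_{n\ge0}\binom nq(\lambda+1)^{n-q}\dfrac{x^n}{k^{n+1}}=\dfrac{x^q}{(k-(\lambda+1)x)^{q+1}}$ yields $x^qS_1(x)$, the generating function of the first sum on the right of \eqref{eq.81}, where $S_1(x)=\sum_{j<k}\dfrac{(-1)^k}{(j-x)(k-(\lambda+1)x)^{q+1}}$. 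Performing the analogous replacements on both indices of $B$ for the second sum --- $\dfrac1{j-x}$ with weight $\binom ma\lambda^{m-a}$ becomes $\dfrac{x^a}{(j-\lambda x)^{a+1}}$ and $\dfrac1{k-y}$ with weight $\binom nb(\lambda+1)^{n-b}$ becomes $\dfrac{x^b}{(k-(\lambda+1)x)^{b+1}}$ --- and summing over $a+b=q$, the partial-fraction identity $\sum_{a+b=q}\dfrac1{u^{a+1}v^{b+1}}=\dfrac1{v-u}\bigl(u^{-(q+1)}-v^{-(q+1)}\bigr)$, applied with $u=j-\lambda x$, $v=k-(\lambda+1)x$ and $v-u=k-j-x$, shows that the generating function of the second sum is $x^q\bigl(S_2(x)-S_3(x)\bigr)$, with $S_2(x)=\sum_{j<k}\dfrac{(-1)^k}{(k-j-x)(j-\lambda x)^{q+1}}$ and $S_3(x)=\sum_{j<k}\dfrac{(-1)^k}{(k-j-x)(k-(\lambda+1)x)^{q+1}}$.

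So \eqref{eq.81} is equivalent to $L=S_1+S_2-S_3$, and this is the crux. In $S_2$ I would substitute $k=j+\ell$ with $\ell\ge1$; then $k-j-x=\ell-x$, and after relabelling the summation indices $S_2(x)=\sum_{i,j\ge1}\dfrac{(-1)^{i+j}}{(i-x)(j-\lambda x)^{q+1}}=L(x)$. In $S_3$ I would substitute $j\mapsto k-j$ inside the inner sum over $1\le j\le k-1$; then $k-j-x$ becomes $j-x$, and $S_3(x)=S_1(x)$. Therefore the generating function of the right side of \eqref{eq.81} is $x^q(S_1+L-S_1)=x^qL$, which is the generating function of the left side; comparing coefficients of $x^p$ gives the theorem.

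The only point demanding care is the justification of these rearrangements. Each series over $(i,j)$ or over $(j,k)$ above converges for $x$ in a neighbourhood of $0$ with $\lambda$ fixed --- the alternating sums converge (e.g.\ after pairing consecutive terms) and each double series is a convergent iterated sum --- so all the geometric/binomial expansions are valid there; alternatively one may work purely formally in $x$, noting that for each fixed $p$ both sides of \eqref{eq.81} are polynomials in $\lambda$ of degree at most $p-q$. Granting this, every step is routine, and I expect the main (but modest) obstacle to be book-keeping the weight-collapsing step cleanly rather than any substantive difficulty.
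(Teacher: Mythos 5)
Your proof is correct, but it takes a genuinely different route from the paper. The paper starts from the integral representation $\zeta(\ov{p+1})=\frac{-1}{p!}\int_0^1(\log\frac1t)^p\frac{dt}{1+t}$, writes $\sum_{m+n=p}\lambda^n\zeta(\ov{m+1})\zeta(\ov{n+1})$ as a double integral over the unit square, splits the square into the two simplices $t_1<t_2$ and $t_2<t_1$ (the same shuffle-decomposition device used throughout the paper) to get the $q=0$ identity $\sum_{m+n=p}\lambda^n\zeta(\ov{m+1})\zeta(\ov{n+1})=\sum_{m+n=p}(\lambda+1)^n(\lambda^m+1)\zeta(m+1,\ov{n+1})$, and then obtains the general $q$ by differentiating $q$ times in $\lambda$ (Leibniz producing the $\binom{m}{a}\binom{n}{b}$ factors). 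You instead work entirely on the series side: Dirichlet-type generating functions in the weight variable $x$, the partial-fraction identity $\sum_{a+b=q}u^{-a-1}v^{-b-1}=(u^{-q-1}-v^{-q-1})/(v-u)$, and the two reindexings $k=j+\ell$ and $j\mapsto k-j$ — in effect a stuffle/partial-fraction argument that proves general $q$ directly; your partial-fraction step is the series-side shadow of the paper's Leibniz differentiation. The paper's route buys absolute convergence for free (everything is a bounded integrand on a bounded domain) and is very short; your route avoids integrals and makes the combinatorics explicit, at the cost of having to justify rearrangements of conditionally convergent double series (the $q=0$, $\alpha=1$ terms such as $\sum_{j,\ell}(-1)^{j+\ell}/(\ell j)$), which you correctly flag and which can be handled by Abel regularization or by checking each coefficient of $x^{p}$ as a finite identity among convergent Euler sums. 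With that justification filled in, your argument is a complete and valid alternative proof of Eq.\,(\ref{eq.81}).
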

\begin{proof}
It is known that \cite[Eq.\,(2.3)]{CE2021}:
\[
\zeta(\ov{p+1})=\frac{-1}{p!}\int^1_0\left(\log\frac1t\right)^p\frac{dt}{1+t}.
\]
We evaluate the following sum
\[
S(p)=\sum_{m+n=p}\lambda^n\zeta(\ov{m+1})\zeta(\ov{n+1})
\]
as an integral form
\[
\frac{1}{p!}\int_{\substack{0<t_1<t_2<1\\0<t_2<t_1<1}}
\left(\log\frac1{t_1}+\lambda\log\frac1{t_2}\right)^p
\frac{dt_1dt_2}{(1+t_1)(1+t_2)}.
\]
This integral can be decomposed into two parts:

\begin{align*}
\mathbb K_1 &= \frac{1}{p!}\int_{0<t_1<t_2<1}
\left(\log\frac1{t_1}+\lambda\log\frac1{t_2}\right)^p
\frac{dt_1dt_2}{(1+t_1)(1+t_2)},\mbox{ and}\\
\mathbb K_2 &= \frac{1}{p!}\int_{0<t_2<t_1<1}
\left(\log\frac1{t_1}+\lambda\log\frac1{t_2}\right)^p
\frac{dt_1dt_2}{(1+t_1)(1+t_2)}.
\end{align*}
It can be seen that 
\[
\mathbb K_1=\sum_{a+b=p}(\lambda+1)^b\zeta(a+1,\ov{b+1})\mbox{ and }
\mathbb K_2=\sum_{a+b=p}\lambda^a(\lambda+1)^b\zeta(a+1,\ov{b+1}).
\]
Therefore we have 
\begin{equation}
\sum_{m+n=p}\lambda^n\zeta(\ov{m+1})\zeta(\ov{n+1})
=\sum_{m+n=p}\zeta(m+1,\ov{n+1})\Bigl[
(\lambda+1)^n(\lambda^m+1)\Bigr].
\end{equation}

We differential $q$ times with $\lambda$ in the above equation, we obtain
the final identity.
\end{proof}
We list some examples. Let $\lambda=0$ in Eq.\,(\ref{eq.81}). The following identity is obtained.
\[
\sum_{m+n=p}\binom{n}{q}\zeta(m+1,\ov{n+1})
=\zeta(\ov{q+1})\zeta(\ov{p+1-q})
-\sum_{a+b=q}\binom{p-a}{b}\zeta(a+1,\ov{p+1-a}).
\]

\noindent Then 

\begin{align*}
\sum_{m+n=p}\zeta(m+1,\ov{n+1})&=\zeta(\ov{1})\zeta(\ov{p+1})-\zeta(1,\ov{p+1}).
\quad\mbox{(for $q=0$, \cite[Eq.\,(2.15)]{Teo2018})}\\
\sum_{m+n=p}n\zeta(m+1,\ov{n+1})&=\zeta(\ov{2})\zeta(\ov{p})
-p\zeta(1,\ov{p+1})-\zeta(2,\ov{p}).
\quad\mbox{(for $q=1$)}\\
\sum_{m+n=p}\binom{n}{2}\zeta(m+1,\ov{n+1}) &=
\zeta(\ov{3})\zeta(\ov{p-1})-\binom{p}{2}\zeta(1,\ov{p+1})
-(p-1)\zeta(2,\ov{p})-\zeta(3,\ov{p-1}).
\quad\mbox{(for $q=2$)}
\end{align*}

Let $\lambda=-1$ in Eq.\,(\ref{eq.81}). Then 
\[
\sum_{m+n=p}\binom{n}{q}(-1)^{m}\zeta(\ov{m+1})\zeta(\ov{n+1})
=(-1)^{p+q}\zeta(p+1-q,\ov{q+1})
+\sum_{a+b=q}\binom{p-b}{a}\zeta(p+1-b,\ov{b+1}).
\]

The $q=0$ and $q=1$ cases are

\begin{align*}
\sum_{m+n=p}(-1)^m\zeta(\ov{m+1})\zeta(\ov{n+1}) &=
\left[1+(-1)^p\right]\zeta(p+1,\ov{1}),\\
\sum_{m+n=p}(-1)^mn\zeta(\ov{m+1})\zeta(\ov{n+1}) &=
\left[1+(-1)^{p+1}\right]\zeta(p,\ov{2})+p\zeta(p+1,\ov{1}).
\end{align*}

Let $\lambda=1$ in Eq.\,(\ref{eq.81}), we have

\[
\sum_{m+n=p}\binom{n}{q}\zeta(\ov{m+1})\zeta(\ov{n+1})
=\sum_{m+n=p}\binom{n}{q}2^{n-q}\zeta(m+1,\ov{n+1})
+\sum_{\substack{m+n=p\\a+b=q}}\binom{m}{a}\binom{n}{b}
2^{n-b}\zeta(m+1,\ov{n+1}).
\]

We set $q=0$ in the above identity, we will get \cite[Eq.\,(2.14)]{CE2021}

\[
\sum_{m+n=p}\zeta(\ov{m+1})\zeta(\ov{n+1})
=\sum_{m+n=p}2^{n+1}\zeta(m+1,\ov{n+1}).
\]

The well-known double-stuffle relation \cite[Eq.\,(2.2)]{Teo2018}
\begin{equation}\label{eq.astuffle}
\zeta(\ov{r})\zeta(\ov{s})
=\zeta(\ov{r},\ov{s})+\zeta(\ov{s},\ov{r})+\zeta(r+s),
\end{equation}
where $r\geq 1$, $s\geq 1$.
We use the above relation and the sum formula \cite[Eq.\,(2.14)]{Teo2018}
\[
\sum_{m+n=p}\zeta(\ov{m+1},\ov{n+1})
=\zeta(\ov{1})\zeta(p+1)-\zeta(\ov{1},p+1),
\]
We derive the following 
\[
\sum_{m+n=p}2^n\zeta(m+1,\ov{n+1})
=\frac{p+1}2\zeta(p+2)+\zeta(p+1,\ov{1})+\zeta(\ov{p+2}).
\]
This identity was obtained in \cite[Eq.\,(5.3)]{CE2021}. We indicate this formula
in the first paragraph of this section.

Let $\lambda=-2$ in Eq.\,(\ref{eq.81}), we have

\begin{align*}
\sum_{m+n=p}\binom{n}{q}(-1)^m2^{n-q}\zeta(\ov{m+1})\zeta(\ov{n+1})
&=\sum_{m+n=p}\binom{n}{q}(-1)^{m}\zeta(m+1,\ov{n+1})\\
&\qquad+\sum_{\substack{m+n=p\\a+b=q}}\binom{m}{a}\binom{n}{b}
2^{m-a}\zeta(m+1,\ov{n+1}).
\end{align*}

We set $q=0$ in the above identity, we have

\[
\sum_{m+n=p}(-1)^m2^n\zeta(\ov{m+1})\zeta(\ov{n+1})
=\sum_{m+n=p}\Bigl[2^{m}+(-1)^m\Bigr]\zeta(m+1,\ov{n+1}).
\]

Using the double-stuffle relation Eq\,(\ref{eq.astuffle}) we have 

\begin{align*}
&\frac{(-2)^{p+1}-1}3\zeta(p+2)\\
&=\sum_{m+n=p}[(-2)^n+(-2)^m]\zeta(\ov{m+1},\ov{n+1})
-\sum_{m+n=p}(-1)^n[1+(-2)^m]\zeta(m+1,\ov{n+1}).
\end{align*}
\titleformat{\section}
{\sffamily\color{sectitlecolor}\Large\bfseries\filcenter}{}{2em}{#1}%


\begin{thebibliography}{99}
\bibitem{AK1999}
T.~Arakawa, M.~Kanekno,
\emph{Multiple zeta values, poly-Bernoulli numbers, and related zeta functions},
Nagoya Math. J. 
\textbf{153}, 189--209 (1999).

\bibitem{Chen2017}
K.-W.~Chen,
\emph{Generalized harmonic numbers and Euler sums},
Int. J. Number Theory {\bf 13}, no. 2, 513--528 (2017).
\href{https://doi.org/10.1142/S1793042116500883}%
{https://doi.org/10.1142/S1793042116500883}.

\bibitem{Chen2019}
K.-W.~Chen,
\emph{Generalized Arakawa-Kaneko zeta functions},
Integral Transforms Spec. Funct. {\bf 30}, no. 4, 282--300 (2019).
\href{https://doi.org/10.1080/10652469.2018.1562450}%
{https://doi.org/10.1080/10652469.2018.1562450}.

\bibitem{CCE2016}
K.-W.~Chen, C.-L.~Chung, M.~Eie,
\emph{Sum formulas and duality theorems of multiple zeta values},
J. Number Theory \textbf{158}, 33--53 (2016).
\href{https://doi.org/10.1016/j.jnt.2015.06.014}%
{https://doi.org/10.1016/j.jnt.2015.06.014}.

\bibitem{CE2018}
K.-W.~Chen, M.~Eie, 
\emph{Some special Euler sums and $\zeta^{\star}(r+2,\{2\}^{n})$}, 
arXiv (2018), \href{https://arxiv.org/abs/1810.11795}%
{arXiv:1810.11795v1 [math.NT]}

\bibitem{CE2021}
K.-W.~Chen, M.~Eie,
\emph{On the convolutions of sums of multiple zeta(-star) values of height one},
arXiv (2021), \href{https://arxiv.org/abs/2110.00231}%
{arXiv: 2110.00231v1 [math.NT]}

\bibitem{CE2022}
K.-W.~Chen, M.~Eie,
\emph{On three general forms of multiple zeta(-star) values},
arXiv (2022), \href{https://arxiv.org/abs/2202.03839}%
{arXiv: 2202.03839v1 [math.NT]}

\bibitem{CC2010}
M.-A.~Coppo, B.~Candelpergher,
\emph{The Arakawa-Kaneko zeta function},
Ramanujan J. {\bf 22}, no. 2, 153--162 (2010).
\href{https://doi.org/10.1007/s11139-009-9205-x}%
{https://doi.org/10.1007/s11139-009-9205-x}.

\bibitem{E09}
M.~Eie, 
\emph{Topics in Number Theory}, 
Monographs in Number Theory \textbf{2}, World Scientific, Singapore, 2009.
\href{https://doi.org/10.1142/7036}%
{https://doi.org/10.1142/7036}.

\bibitem{E13}
M.~Eie, 
\emph{The Theory of Multiple Zeta Values with Applications in Combinatorics}, 
Monographs in Number Theory \textbf{7}, World Scientific, Singapore, 2013.
\href{https://doi.org/10.1142/8769}%
{https://doi.org/10.1142/8769}.

\bibitem{EW2008}
M.~Eie, C.-S.~Wei,
\emph{A short proof for the sum formula and its generalization},
Arch. Math. \textbf{91}, 330--338 (2008).
\href{https://doi.org/10.1007/s00013-008-2547-6}%
{https://doi.org/10.1007/s00013-008-2547-6}.

\bibitem{G96}
A.~Granville, 
\emph{A decomposition of Riemann's zeta-function}, in: 
\emph{Analytic Number Theory} (Kyoto, 1996), 95--102, 
London Math. Soc. Lecture Notes Ser. \textbf{247}, 
Cambridge Univ. Press, Cambridge, 1997.
\href{https://doi.org/10.1017/CBO9780511666179.009}%
{https://doi.org/10.1017/CBO9780511666179.009}.

\bibitem{Kargein2020}
L.~Karg{\i}n,
\emph{Poly-p-Bernoulli polynomials and generalized Arakawa-Kaneko zeta function},
Lith. Math. J. \textbf{60}, 29--50 (2020).
\href{https://doi.org/10.1007/s10986-019-09448-7}%
{https://doi.org/10.1007/s10986-019-09448-7}.

\bibitem{LM95}
T.~Q.~T.~Le, J.~Murakami, 
\emph{Kontsevich's integrals for the Homfly polynomial and relations 
between values of multiple zeta functions}, 
Topology Appl. \textbf{62}, no.~2, 193--206 (1995).
\href{https://doi.org/10.1016/0166-8641(94)00054-7}%
{https://doi.org/10.1016/0166-8641(94)00054-7}.

\bibitem{Ohno1999}
Y.~Ohno,
\emph{A generalization of the duality and sum formulas on the multiple 
zeta values},
J. Number Theory \textbf{74}, no. 1, 39--43 (1999).
\href{https://doi.org/10.1006/jnth.1998.2314}%
{https://doi.org/10.1006/jnth.1998.2314}.

\bibitem{Ohno2005}
Y.~Ohno,
\emph{Sum relations for multiple zeta values},
In: Aoki, T., Kanemitsu S., Nakahara M., Ohno Y. (eds)
Zeta Functions, Topology and Quantum Physics, Developments in Mathematics,
\textbf{14}, Springer, Boston, MA (2005).
\href{https://doi.org/10.1007/0-387-24981-8_8}%
{https://doi.org/10.1007/0-387-24981-$8_8$}.

\bibitem{OEL2013}
Y.~L.~Ong, M.~Eie, W.-C.~Liaw, 
\emph{On generalizations of weighted sum formulas of multiple zeta values}, 
Int. J. Number Theory \textbf{9}, no.~5, 1185--1198 (2013).
\href{https://doi.org/10.1142/S179304211350019X}%
{https://doi.org/10.1142/S179304211350019X}.

\bibitem{Teo2018}
L.-P.~Teo,
\emph{Alternating double Euler sums, hypergeometric identities and a theorem of Zagier},
J. Math. Anal. Appl. \textbf{462}, 777--800 (2018).
\href{https://doi.org/10.1016/j.jmaaa.2018.02.037}%
{https://doi.org/10.1016/j.jmaaa.2018.02.037}.

\bibitem{Xu2019}
C.~Xu,
\emph{Integrals of logarithmic functions and alternating multiple zeta values},
Math. Slovaca \textbf{69}, no.~2, 339--356 (2019).
\href{https://doi.org/10.1515/ms-2017-0227}%
{https://doi.org/10.1515/ms-2017-0227}.

\end{thebibliography}
\end{document}